\newtheoremstyle{dotless}{}{}{\itshape}{}{\bfseries}{}{ }{} 
\theoremstyle{dotless}
\newtheorem{theorem}{Theorem}[section]
\newtheorem{lemma}{Lemma}
\newtheorem{prop}{Proposition}
\newtheorem{cor}{Corollary}
\begin{document} 

\title{Sieving intervals and Siegel zeros}

\author{ Andrew Granville}

\address{D{\'e}partment  de Math{\'e}matiques et Statistique,   Universit{\'e} de Montr{\'e}al, CP 6128 succ Centre-Ville, Montr{\'e}al, QC  H3C 3J7, Canada.}
   \email{andrew@dms.umontreal.ca}

\begin{abstract}
Assuming that there exist (infinitely many) Siegel zeros, we show that the (Rosser-)Jurkat-Richert bounds in the linear sieve cannot be improved, and similarly look at Iwaniec's lower bound  on Jacobsthal's problem, as well as minor improvements to the Brun-Titchmarsh Theorem. We also deduce an improved (though conditional) lower bound on the longest gaps between primes, and rework Cram\'er's heuristic in this situation to show that we would expect gaps around $x$ that are significantly larger than $(\log x)^2$.
\end{abstract}

\thanks{
Thanks to John Friedlander, Dimitris Koukoulopoulos and James Maynard for helpful responses to emailed questions, and Kevin Ford for some useful comments, especially concerning the consequences of Siegel zeros for Cram\'er's conjecture discussed after the statement of Corollary 2, as well as on the exposition.}

\date{}

\maketitle

\section{The history of the problem}

We are interested in determining sharp upper and lower bounds for the number of integers  with no small prime factors in a short interval; specifically, estimates on
\[
S(x,y,z) := \#\{ n\in (x,x+y]:\ (n,P(z))=1\}
\]
where $P(z):=\prod_{p\leq z} p$. This question is an example of problems that can be attacked by the \emph{small sieve} (as is, for example, estimating pairs of integers that differ by 2, which have no small prime factors).   This more general set up goes as follows:

We begin with a set of integers $\mathcal A$ (of size $X$) to be \emph{sieved} (in our case the integers in the interval $(x,x+y]$).
It is important that the proportion of elements of $\mathcal A$ that is divisible by integer $d$ is very close to a multiplicative function (in $d$):
If $\mathcal A_d:=\{ a\in \mathcal A:\ d|a\}$ then we write
\[
\# \mathcal A_d = \frac{g(d)}d X + r(\mathcal A,d)
\]
where $g(d)$ is a multiplicative function, which is more-or-less bounded by some constant $\kappa>0$ on average over primes $p$, even in short intervals (in our case each $g(p)=1$):
\[
\prod_{y<p\leq z} \bigg( 1 -\frac{g(p)}p \bigg)^{-1} \leq \bigg(  \frac{\log z}{\log y} \bigg)^\kappa \bigg( 1 +O\bigg(\frac{1}{\log y} \bigg)\bigg)
\]
(and, in our case, Merten's Theorem allows us to take $\kappa=1$, the \emph{linear sieve}); and 
$r(\mathcal A,d)$ is an error term that must be small on average (in our case each   $|r(\mathcal A,d)|\leq 1$):
\[
\sum_{\substack{ d|P(z)\\ d\leq D}} |r(\mathcal A,d)| \ll_A \frac X{(\log X)^A}
\]
for any $A>0$ where $D=X^\theta$ for some $\theta>0$ (in our case we can take any $\theta<1$). The goal in sieve theory is to estimate
\[
S(\mathcal A,z) := \{ n\in \mathcal A:\ (n,P(z))=1\},
\]
which ``on average'' equals 
\[
G(z)X   \text{ where } G(z):= \prod_{p\leq z} \bigg( 1 -\frac{g(p)}p \bigg),
\]
though here we are   interested in the extreme cases; that is, the smallest and largest values of 
$S(\mathcal A,z)$ under these hypotheses.

In 1965, Jurkat and Richert \cite{JR} showed for $\kappa=1$ that if $X=z^u$ then 
\begin{equation} \label{eq: JR theorem}
(f(u)+o(1))  \cdot G(z)X \leq S(\mathcal A,z)  \leq (F(u)+o(1)) \cdot G(z)X    ,
\end{equation}
where $f(u)= e^\gamma(\omega(u) -\frac{\rho(u)}u)$ and $F(u)= e^\gamma(\omega(u) +\frac{\rho(u)}u)$, and
$\rho(u)$ and $\omega(u)$ are the Dickman-de Bruijn and Buchstab functions, respectively.\footnote{According to Selberg \cite{Se1}, section 5, Rosser had proved this result ten years earlier in unpublished notes, with a proof that looks superficially different, but Selberg felt was probably fundamentally the same.} One can define these functions directly by
\[
f(u)=0 \text{ and }  F(u)=\frac{2e^\gamma}u \text{ for } 0<u\leq 2
\]
(in fact $F(u)=\frac{2e^\gamma}u$ also for $ 2<u\leq 3$) and
\[
 f(u)= \frac 1u \int_1^{u-1} F(t) dt   \text{ and }  F(u)=\frac {2e^\gamma} u  + \frac 1u \int_2^{u-1} f(t) dt   \text{ for all } u\geq 2.
\]

Iwaniec  \cite{Iw1} and Selberg \cite{Se1} showed that this result is ``best possible'' by noting that the sets
\[
\mathcal A^\pm =\{ n\leq x: \lambda(n)=\mp 1\}  
\]
 where $\lambda(n)$ is Liouville's function (so that $\lambda(\prod_p p^{e_p})= (-1)^{\sum_p e_p}$) satisfy the above hypotheses, with 
 \begin{equation} \label{eq: Selbergexample}
 S(\mathcal A^-,z) =(f(u)+o(1)) \cdot G(z) \# \mathcal A^- \text{ and } S(\mathcal A^+,z) =(F(u)+o(1))\cdot  G(z) \# \mathcal A^+.
 \end{equation}
  If $u\leq 2$ then $S(\mathcal A^-,z)$  counts the  integers $\leq x$, which have an even number of prime factors, all $>z\geq x^{1/2}$: The only such integer is $1$, and therefore $S(\mathcal A^-,z)=1$. Thus sieving with these hypotheses (and minor variants) one  cannot   detect primes (this is the so-called  \emph{parity phenomenon}).
 
 In order to detect primes in $[1,x]$ or just to better understand $S(x,y,z)$, we need some other techniques. The purpose of this article is to show that, even so, it will be a (provably) difficult  task. We know that if $y=z^u$ then
 \[
 (f(u)+o(1)) \cdot G(z)y \leq S(x,y,z) \lesssim F(u) \cdot G(z)y , \text{ where } G(z)=\prod_{p\leq z} \bigg( 1 -\frac{1}p \bigg),
 \]
 since sieving an interval is, as we discussed, an example of this more general linear sieve problem.  In \cite{Se2}, Selberg asked ``\textsl{is it possible that these quantities [the best possible upper and lower bounds for sieving an interval] behave significantly differently [from the bounds in the general linear sieve problem] ? ... We do not know the answer}''.
 
 We will show that if there are infinitely many Siegel zeros then the general bounds are also best possible for the problem of sieving intervals.

 \begin{cor} \label{cor: SiegelGeneralv} Assume that there are infinitely many Siegel zeros.  For each fixed $v>1$,  there exist
 arbitrarily large $x,X,y,z$ with $y=z^v$ such that 
 \[
 S(x,y,z) = (F(v)+o(1))G(z)y \text{ and } S(X,y,z) = (f(v)+o(1))G(z)y 
  \]
 \end{cor}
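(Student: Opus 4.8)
The strategy is to transplant the Iwaniec--Selberg extremal sets $\mathcal{A}^{\pm}$ into a genuine interval, using a Siegel zero to produce a surrogate for Liouville's function. Fix a real character $\chi$ modulo $q$ whose $L$-function has a real zero $\beta = 1 - \lambda/\log q$ with $\lambda = \lambda_q \to 0$ along the sequence of $q$ supplied by the hypothesis. The first ingredient is the classical consequence that such a zero makes $\psi(t,\chi) = -t^{\beta} + O\!\big(t\exp(-c\sqrt{\log t})\big)$ throughout a range $q \le t \le q^{A}$, with $A = A(\lambda) \to \infty$ (this is where Deuring--Heilbronn repulsion of the other zeros is used), so that --- after fixing the sieving level $z$ to be a suitable power of $q$ inside this range --- the primes $p > z$ with $\chi(p) = +1$ are so scarce that $\sum 1/p = o(1)$. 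Since every prime factor of a $z$-rough integer exceeds $z$, this gives $\chi(n) = (-1)^{\Omega(n)} = \lambda(n)$ for all but an $o(1)$-proportion of the $z$-rough $n$ in play: on rough numbers $\chi$ behaves exactly like Liouville's function.

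Next I would carry out Selberg's computation with $\chi$ in the role of $\lambda$. Writing $y = z^{v}$ and choosing a residue $a$ with $\chi(a) = -1$ (for the $F(v)$ assertion; $\chi(a) = +1$ for the $f(v)$ one), I sieve the set $\mathcal{A} = \{a + qn : n \in (M, M+y]\}$. This $\mathcal{A}$ satisfies the linear-sieve hypotheses with $\kappa = 1$ and any level $\theta < 1$, so by \eqref{eq: JR theorem} its sieved cardinality lies between $(f(v)+o(1))G(z)y$ and $(F(v)+o(1))G(z)y$; and, as in the proof of \eqref{eq: Selbergexample}, one shows the relevant extreme is attained, because dividing $\mathcal{A}$ by a prime $p$ sends the progression $a \bmod q$ to $a\bar p \bmod q$ and $\chi(a\bar p) = \chi(a)\chi(p) = -\chi(a)$ for all but the negligible set of primes $p > z$ described above --- so the Buchstab/combinatorial expansion of $S(\mathcal{A},z)$ flips the $\chi$-class at essentially every level, exactly as the classical expansion flips the sign of $\lambda$, and $F, f$ are by definition the solutions of these recursions. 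Finally, to turn this into a genuine interval: for each $p \le z$ (note $p \ne q$) the constraint $p \nmid a+qn$ reads $n \not\equiv -a\bar q \pmod p$, and by the Chinese Remainder Theorem there is a single $c \bmod P(z)$ with $c \equiv -a\bar q \pmod p$ for all $p \le z$, so that $p \nmid a+qn$ for all $p \le z$ is equivalent to $(n-c, P(z)) = 1$. Hence $S(\mathcal{A}, z) = S(M-c, y, z)$, and since $x \mapsto S(x,y,z)$ has period $P(z)$ we may replace $M-c$ by $M-c + kP(z)$ for arbitrary $k$; this yields arbitrarily large $x$ (resp.\ $X$) with $S(x,y,z) = (F(v)+o(1))G(z)y$ (resp.\ $S(X,y,z) = (f(v)+o(1))G(z)y$).

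The step I expect to be the main obstacle is making all the size parameters fit together, since the construction must stay inside the Siegel zero's zone of influence: the rough numbers occurring in the combinatorial expansion have size a fixed power of $z$, hence a fixed power of $q$, so $z$ and $q$ must be tied together and the range $q^{A(\lambda)}$ must be wide enough --- this is precisely why one needs a genuine Siegel zero of quality $\lambda_q \to 0$ rather than merely a zero avoiding a region of fixed width. One must also verify that the accumulated errors stay $o(G(z)y)$: the P\'olya--Vinogradov error in passing between the character and the progression, the sieve remainder $\sum_{d \le D}|r(\mathcal{A},d)|$ with $D = y^{\theta}$, and the errors at each Buchstab step --- in particular that the total contribution of the exceptional primes $\{p > z : \chi(p) = 1\}$ to the expansion is genuinely negligible, which is exactly what $\sum 1/p = o(1)$ buys. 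Checking that $\chi \approx \lambda$ uniformly over the $O(1)$ scales that appear, and that $\mathcal{A}$ really does meet the sieve axioms with level $\theta < 1$, are then routine.
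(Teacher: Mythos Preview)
Your approach is essentially the paper's: use the Siegel zero to make $\chi$ mimic $\lambda$ on $z$-rough integers, carry out the Selberg/Buchstab computation for an arithmetic progression $a\pmod q$, and then transfer to an interval via the Chinese Remainder Theorem. The Buchstab-flip heuristic you describe is exactly what the paper formalizes as its Theorem~\ref{thm: pik} and Corollary~\ref{cor: 1stex}.

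There is, however, a genuine gap in your CRT step. You write ``for each $p\le z$ (note $p\ne q$)'' and then claim $S(\mathcal A,z)=S(M-c,y,z)$. But $q$ need not be prime, and in any case every prime dividing $q$ lies below $z$ (you have $z=q^A$). For such primes $p\mid q$ the congruence $p\nmid a+qn$ holds automatically (since $(a,q)=1$), so your CRT only produces a $c$ with
\[
S(\mathcal A,z)=\#\{m\in(M-c,M-c+y]:(m,P_q(z))=1\},\qquad P_q(z)=\prod_{\substack{p\le z\\ p\nmid q}}p,
\]
which is \emph{not} $S(M-c,y,z)$. Correspondingly, the sieve density for $\mathcal A$ is $G_q(z)=\prod_{p\le z,\,p\nmid q}(1-1/p)=\tfrac{q}{\phi(q)}G(z)$, not $G(z)$, and $q/\phi(q)$ is not $1+o(1)$ in general.

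The paper closes this gap with a short greedy argument that you omit: having an interval with $\sim F(u)\,\tfrac{q}{\phi(q)}G(z)\,y$ integers coprime to $P_q(z)$, one removes, for each prime $p_j\mid q$ in turn, the residue class $a_j\pmod{p_j}$ containing the \emph{fewest} survivors, thereby losing at most a factor $(1-1/p_j)$ at each step; this yields a genuine interval with at least $F(u)\,G(z)\,y$ integers coprime to $P(z)$. (For the $f(v)$ case one instead removes the \emph{richest} class.) After this one-sided construction, the matching inequality comes from Jurkat--Richert applied to the interval itself; note also that because the paper takes $x=qy$ one has $qy=z^u$ with $u=v+1/A$, and a continuity argument for $F$ and $f$ as $A\to\infty$ is needed to land on $F(v),f(v)$.
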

 
 We will explain   what exactly we mean by ``Siegel zeros'' in the next section.
 
 Siebert in \cite{Si}   proved a similar result though with a slightly broader sieve problem (he allowed sieving  arithmetic progressions), and he obtained a slightly weaker conclusion because he did not realize that an estimate as strong as \eqref{eq: PNTAPs} was at his disposal. Our proof is also a little easier since we determine our estimates in Corollary \ref{cor: SiegelGeneralv} by calculating in terms of  the Selberg-Iwaniec $\mathcal A^\pm $ examples, rather than calculating complicated explicit expressions for $F(v)$ and $f(v)$.  One can deduce from Corollary \ref{cor: SiegelGeneralv} that if $\pi(qx;q,a)\leq (2-\epsilon) \frac q{\phi(q)} \frac{x}{\log x}$ for $x=q^A$ with $q$ sufficiently large then there are no Siegel zeros, which reproves a result of Motohashi \cite{Mo}.\footnote{Actually Motohashi showed the more precise bound that if $\pi(qx;q,a)\leq (2-\epsilon) \frac q{\phi(q)} \frac{x}{\log x}$ and $L(\beta, \chi_q)=0$ with $\beta\in \mathbb R$ then $\beta<1-\frac{c\epsilon}{\log q}$. We   obtain a similar result in the first part of Proposition \ref{prop;Motohashi}. Motohashi remarked that ``an extension of [his] theorem in a  direction similar to Siebert is quite possible''..}

 Corollary \ref{cor: SiegelGeneralv}   implies that if $1<v\leq 2$ then there are arbitrarily large $X,y,z$ with $y=z^v$ for which
 \[
 S(X,y,z) = o\bigg( \frac y{\log y}\bigg) .
 \]
since $f(2)=0$. However $f(u)>0$ for $u>2$ and so it is of interest to understand $S(x,y,z)$ when $y=z^{2+o(1)}$. The key result in this range is 
 due to Iwaniec \cite{Iw2}\footnote{A slightly weaker version of this result more-or-less follows from Iwaniec's much earlier Theorem 2 in \cite{Iw1}.} who showed that if $y\gg z^2$ then
\[
S(x,y,z)\geq  \frac {4y}{(\log y)^2} \cdot (\log (y/z^2)-O(1)) .
\]
(In fact uniformly for $2<u\leq 3$ he proved that  
 \[
S(x,y,z)\geq   \bigg( f(u) - \frac c{\log y}\bigg) \cdot   \prod_{p\leq z} \bigg( 1 -\frac{1}p \bigg) y,
\]
where $f(u)=\frac{2e^\gamma\log ( u-1)}u$ in this range.)

 \begin{prop} \label{prop; Iwaniec}
 Suppose that there is an infinite sequence of primitive real characters $\chi$  mod $q$ such that there is an exceptional zero $\beta=\beta_q$ of each $L(s,\chi)$. For each, there exists a corresponding value of $y$ such that if  $y^{1-\epsilon}>z>y^{1/2-o(1)}$ then there exists an integer $X$ for which
 \[
 S(X,y,z)\lesssim  \frac {4y}{(\log y)^2}  \log^+ (qy/z^2 )+ (1-\beta_q) y 
 \]
 where $\log^+t=\max\{ 0,\log t\}$. We can take $y=q^{A-1}$ with $A\to \infty$ as slowly as we like.
 \end{prop}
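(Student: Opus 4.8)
The strategy is to produce, for each $q$ in the sequence, an interval $(X,X+y]$ which plays, for Iwaniec's lower-bound sieve in the range $2<u\le 3$, exactly the role that the Selberg--Iwaniec sets $\mathcal A^\pm$ play for the Jurkat--Richert sieve, with the Siegel-zero character $\chi$ in place of Liouville's $\lambda$; the size of the resulting extremum is then read off the $\mathcal A^-$ count (products of two primes) and Iwaniec's theorem, rather than from the formula for $f$. I would first record the two arithmetic consequences of $L(\beta_q,\chi)=0$ with $1-\beta_q$ small: on the one hand $\chi$ ``pretends to be $-1$'', i.e.\ $\psi(t,\chi)=-t^{\beta_q}/\beta_q+O(t\exp(-c\sqrt{\log t}))$ for $t$ up to a fixed power of $q$, so that almost every such prime is a quadratic non-residue modulo $q$; on the other hand the prime number theorem for progressions \eqref{eq: PNTAPs}, which makes $\pi(t;q,a)$ as small as $O\big((1-\beta_q)\,\tfrac t{\phi(q)}\log t\big)$ when $a$ is a quadratic residue modulo $q$ and $t$ lies in the same range.

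Next I would choose the interval at the scale $X\asymp q^A=qy$; this is precisely the source of the factor $q$ inside $\log^+(qy/z^2)$, since the integers being sieved have size about $qy$, so the effective sieve parameter is $u'=\log(qy)/\log z=\log y/\log z+\log q/\log z$, slightly larger than the naive $u=\log y/\log z$. Because $z\ge y^{1/2-o(1)}$, once $A$ is large every $z$-rough $n\in(X,X+y]$ satisfies $\Omega(n)\in\{1,2\}$ (the case $\Omega(n)\ge3$ persists only for bounded $A$, where it occupies a negligible top sub-interval and is in any event dominated by the two-prime term). I would pin $X$ down by the Chinese Remainder Theorem so that, first, $\prod_{p\le w}p$ divides $X$ for a suitable $w=q^{o(1)}$, so that divisibility of $X+j$ by a prime $p\le w$ is governed by $j$ alone and the interval ``looks like'' $(0,y]$ below $w$; and, second, so that the genuine primes in $(X,X+y]$ are thrown into residue classes modulo $q$ on which the Siegel main term of \eqref{eq: PNTAPs} is nearly cancelled. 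This second requirement is where the proof really uses the hypothesis, and I expect it to be the main obstacle, since an honest interval is not a union of residue classes modulo $q$: the way out is to write a would-be $z$-rough integer as (a product of primes $\le w$) times (at most one prime $>w$) and to apply the Siegel-zero estimate only to that single large prime factor — the same device used by Siebert and Motohashi for the companion arithmetic-progression problem, but bookkept here for a true interval.

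With $X$ fixed I would expand $S(X,y,z)$ by Buchstab's identity and split by $\Omega(n)$. The two-prime part is $\sum_{z<p_1\le\sqrt{X+y}}\big(\pi((X+y)/p_1)-\pi(X/p_1)\big)$; bounding the inner count by the Brun--Titchmarsh theorem for intervals, $\pi(t+v)-\pi(t)\le(2+o(1))\,v/\log v$ with $v=y/p_1$, and summing Mertens' estimate $\sum_{z<p_1\le\sqrt{X+y}}1/p_1=\log\!\big(\log\sqrt{X+y}/\log z\big)+O(1/\log z)$, yields (using $X\asymp qy$) a bound $(1+o(1))\,\tfrac{4y}{(\log y)^2}\log^+(qy/z^2)$, and this part needs no hypothesis. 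The one-prime part is the number of primes in $(X,X+y]$ that exceed $z$; grouping these by residue class modulo $q$ and invoking the Siegel placement of $X$ together with the bound for $\pi(t;q,a)$ on quadratic-residue classes controls it by $O\big((1-\beta_q)y\big)$. Adding the two pieces gives the claimed inequality.

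Finally I would check that no arithmetic has been conceded, i.e.\ that the upper bound obtained really is the sieve extremum: by \eqref{eq: JR theorem} and the lower bound of Iwaniec quoted above, $S(X,y,z)\ge\big(f(u')-c/\log y\big)\prod_{p\le z}(1-1/p)\,y$ with $u'=\log(X+y)/\log z\in(2,3]$, and a short computation — most transparently via the Selberg--Iwaniec identity $S(\mathcal A^-,z)=(f(u')+o(1))\,G(z)\,\#\mathcal A^-$, i.e.\ by again counting products of two primes rather than manipulating $f(u)=2e^\gamma\log(u-1)/u$ — shows that this lower bound equals $(1+o(1))\,\tfrac{4y}{(\log y)^2}\log^+(qy/z^2)$. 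Thus the interval we built is essentially extremal and the $(1-\beta_q)y$ term is genuine additive slack, significant exactly when the exceptional zero is only moderately close to $1$; it remains to verify the routine points that $A=A(q)\to\infty$ may be taken slowly enough (relative to $q$ and to $1/(1-\beta_q)$) for all the $o(\cdot)$ terms to be admissible, and that the sieve level of distribution $D=X^\theta$ for any $\theta<1$ is available because $X$ is a large power of $q$.
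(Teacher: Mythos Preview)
Your proposal has a genuine gap in the one-prime part, and the workaround you sketch does not repair it. You want to choose $X$ so that the primes in $(X,X+y]$ fall into quadratic-residue classes modulo $q$, where \eqref{eq: PNTAPs} makes them rare. But the interval has length $y=q^{A-1}\gg q$, so as $j$ runs over $[0,y)$ the residues $X+j\pmod q$ cover every class about $y/q$ times; no choice of $X$ can steer the primes in $(X,X+y]$ into the residue classes with $\chi=1$. By the prime number theorem in short intervals the one-prime contribution is $\sim y/\log y$, not $O((1-\beta_q)y)$. Your proposed decomposition ``(product of primes $\le w$) times (at most one prime $>w$)'' cannot help either: an integer counted by $S(X,y,z)$ is $z$-rough, hence has no prime factors $\le w<z$, so the decomposition is vacuous for the objects being counted.

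The device the paper uses, which your proposal lacks, is to count not in an interval but in a single arithmetic progression modulo $q$ over $[1,qy]$, and then to transport that count to an interval. One fixes $a$ with $\chi(a)=1$ and studies $N(qy,z;q,a)=\pi_1(qy,z;q,a)+\pi_2(qy,z;q,a)$; here the Siegel zero genuinely kills the one-prime term (primes $\equiv a\pmod q$ with $\chi(a)=1$ are scarce), and it also \emph{doubles} the two-prime term, giving $2\pi_{2,q}(qy,z)/\phi(q)$ rather than $\pi_{2,q}(qy,z)/\phi(q)$. The transport to an interval is the relabeling $b\equiv a\,q^{-1}\pmod{P_q(z)}$, so that $(b+j,P_q(z))=(a+jq,P_q(z))$ for all $j$; this identifies $N(qy,z;q,a)$ with an interval of length $y$ sieved by the primes $\le z$ coprime to $q$. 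Finally one sieves out the primes dividing $q$ by choosing, for each $p\mid q$, the residue class carrying the most survivors, which costs at most a factor $\phi(q)/q$. This is also why the constant is $4$ and not $2$: the factor $2$ from the Siegel doubling in the progression combines with the factor $2$ in $\pi_2(qy,z)\lesssim \tfrac{2qy}{(\log qy)^2}\log(qy/z^2)$. Your Brun--Titchmarsh route for the two-prime part, applied directly in the interval without the progression, would miss the Siegel doubling and in any case cannot compensate for the uncontrolled $y/\log y$ from the primes.
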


 We will also deduce the following:
 
 \begin{cor} \label{cor: PrimeGaps}
 Suppose that there are infinitely Siegel zeros $\beta$ with $1-\beta<\frac 1{(\log q)^B}$ for some integer $B\geq 1$.
Then there are infinitely primes $p_n$ (where $p_n$ is $n$th smallest prime) for which
 \[
 p_{n+1}-p_n\gg   \log p_n (\log\log p_n)^{B-1}.
 \]
 \end{cor}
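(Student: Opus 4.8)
The plan is to derive this from Proposition~\ref{prop; Iwaniec} together with the elementary observation that an interval containing few integers free of small prime factors must contain a long block of consecutive composites, hence a long prime-free stretch.

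Concretely, let $\chi\bmod q$ be one of the infinitely many characters with a Siegel zero $\beta=\beta_q$ satisfying $1-\beta_q<(\log q)^{-B}$, and apply Proposition~\ref{prop; Iwaniec} with $y=q^{A-1}$ (taking $A\to\infty$ as slowly as we please) and with sieve level exactly $z=(qy)^{1/2}$, which lies in the admissible range $y^{1/2-o(1)}<z<y^{1-\epsilon}$ once $A\geq3$. Then $\log^+(qy/z^2)=0$, so Proposition~\ref{prop; Iwaniec} produces an integer $X$ with $S(X,y,z)\lesssim(1-\beta_q)\,y\ll y/(\log q)^B$. Since $X>z$, every $n\in(X,X+y]$ having a prime factor $\leq z$ is composite, so the primes in $(X,X+y]$ --- at most $S(X,y,z)$ of them --- split the interval into at most $S(X,y,z)+1$ maximal runs of consecutive composites, the longest of which has length $\gg y/(S(X,y,z)+1)\gg 1/(1-\beta_q)>(\log q)^B$ (when $S(X,y,z)=0$ the whole interval, of length $y=q^{A-1}\gg(\log q)^B$, is composite and the bound is trivial). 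Provided $X$ can be taken of size $\asymp y^2$, so that $\log p_n\asymp\log X\asymp(A-1)\log q$ and $\log\log p_n\asymp\log\log q$ for $p_n$ the largest prime below this run, one then has $(\log q)^B\gg(A-1)\log q\,(\log\log q)^{B-1}\asymp\log p_n(\log\log p_n)^{B-1}$ for every $B\geq2$ with $A$ growing slowly; hence $p_{n+1}-p_n\gg\log p_n(\log\log p_n)^{B-1}$, and letting $q$ run over the Siegel moduli produces infinitely many such $n$. (The case $B=1$ is the trivial bound $p_{n+1}-p_n\gg\log p_n$.)

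The main obstacle is precisely the ``provided'': Proposition~\ref{prop; Iwaniec} asserts only that \emph{some} $X$ works, with no control on its size, whereas here I need $\log X\asymp\log y$. So one must re-enter its proof --- which realises $(X,X+y]$ as an ``$\mathcal A^-$-type'' interval, one on which the Siegel zero forces the integers that survive the sieve to have an even number of large prime factors and so to be almost never prime --- and check that the good interval may be placed at a small location. This should cause no trouble: the Siegel zero governs all of $[1,e^{1/(1-\beta_q)}]$, a range far larger than $y^2$ once $1-\beta_q<(\log q)^{-B}$, so there is ample room to take $X\asymp y^2$; and since $y=X^{1/2}$ lies below the threshold at which unconditional theorems would force $\sim y/\log X$ primes into $(X,X+y]$, no contradiction arises from $S(X,y,z)$ being small. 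The remaining points --- that $z=(qy)^{1/2}$ is admissible and that $(1-\beta_q)y$ dominates the bound of Proposition~\ref{prop; Iwaniec} --- are routine.
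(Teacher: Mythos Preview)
Your proposal has a genuine gap precisely at the point you flag. In the proof of Proposition~\ref{prop; Iwaniec}, the integer $X$ is constructed by the Chinese Remainder Theorem as a residue class modulo $P(z)$ (one takes $b\equiv ra\pmod{P_q(z)}$ and then fixes residues modulo the primes dividing $q$). Here $z=(qy)^{1/2}=q^{A/2}$, so $\log P(z)\sim q^{A/2}$ by the prime number theorem, whereas $\log(y^2)\asymp A\log q$: the modulus is doubly exponentially larger than $y^2$, and there is no mechanism to force the representative down to size $y^{O(1)}$. Your heuristic that ``the Siegel zero governs all of $[1,e^{1/(1-\beta)}]$'' misses the point: the Siegel zero controls primes in \emph{progressions} modulo $q$, and the CRT step that converts progression information into interval information is exactly what destroys control of the location. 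As a sanity check, if your pigeonhole argument worked with $X\asymp y^2$ it would yield gaps of size $(\log q)^B\asymp_A(\log p_n)^B$, which for $B\ge 2$ is vastly stronger than the corollary (and stronger than anything anyone believes).

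The paper's argument is genuinely different. Rather than hope $X$ is small, it accepts the $N:=S(X,y,z)\lesssim(1-\beta)y$ survivors $X+a_1,\dots,X+a_N$ and removes them by the Erd\H{o}s--Rankin device: choose $N$ further primes $p_1<\cdots<p_N$ from the interval $(z,Z]$ with $Z:=(1+\epsilon)(1-\beta)y\log y$ (possible since $z=(qy)^{1/2}=o((1-\beta)y)$ by Siegel's theorem and $A$ large), and use CRT again to find $x\equiv X\pmod{P(z)}$, $x\equiv -a_j\pmod{p_j}$ for each $j$, with $x\in(P(Z),2P(Z)]$. Now $(x,x+y]$ is entirely prime-free, and crucially the location \emph{is} controlled: $\log x\sim Z\sim(1-\beta)y\log y$. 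One then reads off $\log\log x\sim\log y\sim A\log q$ and $y\sim A^{-B}\log x\,(\log\log x)^{B-1}$. So the paper gets the full gap $y$ rather than $y/N$, at a height governed by $P(Z)$ rather than $P(z)$; the extra sieving step is not a technicality but the essential idea you are missing.
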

  
One can obtain longer gaps between primes from the proof if there are Siegel zeros even closer to $1$. Pretty much the same lower bounds on the maximal gaps between primes were recently given by Ford \cite{For} (by fundamentally the same proof).
Unconditionally proved lower bounds on the largest prime gaps \cite{FGM}  are slightly smaller than $\log p_n (\log\log p_n)$, and the techniques used seem unlikely to be able to prove much more than $\log p_n (\log\log p_n)^2$, so we are in new territory here once $B>3$.

It is believed that  there are gaps $p_{n+1}-p_n\gg   (\log p_n)^2$ but that seems out of reach here. However, after the proof of Corollary \ref{cor: PrimeGaps}, we show that the standard heuristic (see e.g. \cite{GL}) implies that if there are infinitely many Siegel zeros then
\[
\limsup_{x\to \infty}  \frac{ \max_{p_n\leq x} p_{n+1}-p_n }{(\log x)^2} \to \infty
 \]
 (as suggested to me by Ford).
Moreover under the hypothesis of  Corollary \ref{cor: PrimeGaps} with $B=2/\epsilon-1$, this same heuristic implies that
there are infinitely $p_n$ for which
\[
p_{n+1}-p_n  \gg (\log p_n)^2 (\log\log p_n)^{1-\epsilon}.
\]

 \emph{Jacobsthal's function} $J(m)$ is defined to be the smallest integer $J$ such that every $J$ consecutive integers contains one which is coprime to $m$. 
 Therefore if $m=P(z)$ then $J(m)$ is the smallest integer $y$ for which $S(x,y,z)\geq 1$ for all $x$.  It is not difficult to show that $J(m)\sim \frac{m}{\phi(m)} \omega(m)$ for almost all integers $m$, but we are most interested in $\max_{m\leq M} J(m)$, and believe that the maximum occurs either for the largest $m=P(z)\leq M$ or for another integer that has almost as many prime factors.
 
 Iwaniec's result above establishes that 
$ J(P(z))\ll z^2$; by the prime number theorem which means that $J(m)\ll (\omega(m)\log \omega(m))^{2}$ (where 
 $\omega(m)$ denotes the number of distinct prime factors of $m$), and Iwaniec \cite{Iw2} deduced (cleverly) that this upper bound then holds for all integers $m$.
  The proof in  \cite{FGM}  implies that if $m=P(z)$ then $J(m)\gg \omega(m) (\log \omega(m))^{2}\frac{\log_3 \omega(m)}{\log_2 \omega(m)}$, and the methods there suggest the conjecture that the largest $J(m)$ gets is something like  $ \omega(m) (\log \omega(m))^{3+o(1)}$.  However our proof of Corollary \ref{cor: PrimeGaps}
implies that if there are infinitely Siegel zeros $\beta$ with $1-\beta<\frac 1{(\log q)^B}$ for some integer $B\geq 1$, then there exist integers $m$ with
$J(m)\gg  \omega(m) (\log \omega(m))^{B}$; and therefore this conjecture is untrue if $B$ can be taken to be $>3$.
(This is also easily deduced from the discussion in Ford \cite{For}.)

 Corollary \ref{cor: SiegelGeneralv}  implies that if $1\leq v\leq 3$ (that is, $z\geq y^{1/3+o(1)}$) then\footnote{Selberg explains in \cite{Se3}, section 18 that, before his elementary proof of the prime number theorem, when analyzing what prevented him from substantially improving the upper bound $ < \frac {2y}{\log y}$, he found all of the main contribution to the terms in the sieve sum came from integers with an odd number of (large) prime factors, and therefore came up with the $\mathcal A^\pm $ examples. Bombieri   greatly expanded on this phenomenon in \cite{Bo}.}
 \[
S(x,y,z) \sim     \frac {2y}{\log y}.
 \]

A subset $A$ of the integers in $[0,y]$ has \emph{length} $\leq y$, and is \emph{admissible} if for every prime $p$ there is a residue class mod $p$ that does not contain an element of $A$. It is believed that the largest admissible set of length $y$ contains $\sim \frac {y}{\log y}$ elements. It is worth emphasizing that our results show that this belief is untrue if there are Siegel zeros:

  \begin{cor} \label{cor: Admissible}
 Suppose that there are infinitely many Siegel zeros. Then there are arbitrarily large $y$ for which there are admissible sets $A(y)$ of length $y$ with 
 \[
 A(y)\sim  \frac {2y}{\log y}.
 \]
  \end{cor}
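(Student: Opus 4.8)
The plan for Corollary \ref{cor: Admissible} is to feed the extremal sieved interval supplied by Corollary \ref{cor: SiegelGeneralv}, taken with $v$ barely larger than $1$, into the admissibility framework, after trimming away a negligible subset to enforce admissibility at the primes lying between $z$ and $y$.

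Fix a small $\epsilon>0$ and put $v=1+\epsilon$. Corollary \ref{cor: SiegelGeneralv} produces arbitrarily large $x,y,z$ with $y=z^v$ and $S(x,y,z)=(F(v)+o(1))G(z)y$; since $F(v)=\tfrac{2e^\gamma}{v}$ for $v\le 3$ while $G(z)=\tfrac{e^{-\gamma}}{\log z}(1+o(1))=\tfrac{v e^{-\gamma}}{\log y}(1+o(1))$ by Mertens' theorem, this says $S(x,y,z)=(1+o_\epsilon(1))\tfrac{2y}{\log y}$. I would then set $B_0:=\{\,n-x:\ n\in(x,x+y],\ (n,P(z))=1\,\}\subseteq[0,y]$, so that $|B_0|=S(x,y,z)$, and observe that $B_0$ is already admissible ``at the easy primes'': for $p\le z$ it avoids the class $-x\bmod p$ (every $n$ counted by $S(x,y,z)$ is coprime to $p$), and for $p>|B_0|$ it occupies fewer than $p$ residue classes and hence misses one. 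So the only obstruction to admissibility of $B_0$ is at primes $p$ with $z<p\le|B_0|$.

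To kill that obstruction I would pass to a subset: for each prime $p$ in $(z,\,4y/\log y]$ — a range containing all the bad primes once $y$ is large, since $|B_0|<4y/\log y$ — choose a residue class $r_p\bmod p$ meeting $B_0$ in as few points as possible, which by pigeonhole is at most $|B_0|/p$ of them, and delete from $B_0$ every element lying in some $r_p\bmod p$. The resulting $A(y)$ is admissible (it avoids $-x\bmod p$ for $p\le z$, avoids $r_p\bmod p$ for $z<p\le 4y/\log y$, and misses a class modulo every larger prime because $|A(y)|\le|B_0|<4y/\log y$), while the number of deleted elements is at most
\[
|B_0|\sum_{z<p\le 4y/\log y}\frac1p=|B_0|\Big(\log\log\tfrac{4y}{\log y}-\log\log z+o(1)\Big)=|B_0|\big(\log(1+\epsilon)+o(1)\big),
\]
using $\log z=\tfrac{\log y}{1+\epsilon}$. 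Hence $|A(y)|=(1+O(\epsilon)+o(1))\tfrac{2y}{\log y}$. Finally, running this with $\epsilon=1/k$ and, for each $k$, selecting a value $y=y_k\to\infty$ large enough that the $o(1)$'s beat $1/k$, one obtains arbitrarily large $y$ carrying an admissible set $A(y)$ with $|A(y)|\sim\tfrac{2y}{\log y}$.

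The routine inputs are Mertens' estimates for $G(z)$ and for $\sum_p 1/p$ together with the bookkeeping of the deletion. The one point that genuinely needs care — and the reason for taking $v$ close to $1$ rather than, say, $v=3$ — is that the band of primes $z<p\le|B_0|$ at which admissibility can fail must be thin enough, measured by $\sum_p 1/p\approx\log v$, that clearing a single residue class per prime costs only a negligible proportion of $B_0$; for $v$ bounded away from $1$ this cost is a positive proportion (indeed at least $|B_0|$ once $v\ge e$), so the argument recovers the sharp constant $2$ only in the limit $v\to 1^+$.
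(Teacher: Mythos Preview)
Your proof is correct and follows essentially the same approach as the paper's: invoke Corollary~\ref{cor: SiegelGeneralv} with $v$ just above $1$ to get a sieved interval $B_0$ of size $\sim 2y/\log y$, then delete one sparsest residue class per prime in the short range above $z$ to enforce admissibility at cost $O(\epsilon)$, and finally let $\epsilon\to 0^+$. The only cosmetic differences are that the paper writes $v=1/(1-\epsilon)$, sieves primes all the way up to $y$ rather than up to $|B_0|$, and bounds the loss via the iterative product $\prod_{z<p\le y}(1-1/p)\sim 1-\epsilon$ instead of your union bound $\sum_{z<p}1/p\sim\log(1+\epsilon)$.
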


 We can be more precise about sets that have many integers left unsieved:

  \begin{prop} \label{prop;Motohashi}
 Suppose that there is an infinite sequence of exceptional zeros $\beta$ corresponding to real primitive characters of conductor $q$, and let $z=y^u$ with $1\leq u\leq 3$.   Then there exists values of $X$ such that:
 \begin{itemize}
 \item  If  $1-\beta\leq \frac {\delta^2} {\log q}$ for some fixed $\delta>0$    then 
 \[
S(X,y,z) \geq    \frac {2y}{\log y}  - (2\delta\, C(u)+o(1))    \frac {y}{\log y}  
\]
where $C(u)=\sqrt{2(1-\log^+ (u-1) )}$;
\item  If   $1-\beta\leq \frac 1{(\log q)^\kappa}$ for some fixed $\kappa>1$ then 
\[
S(X,y,z)\geq  \frac {2y}{\log y}  - C_\kappa(u) (\log y)^{\tfrac 2{\kappa+1}} \frac {y}{(\log y)^2}     ;
\]
for some constant $C_\kappa(u) >0$;
\item If  $1-\beta \leq  \exp(-(\log q)^{1/\tau})$ for some fixed $\tau\geq 1$ then
\[
S(X,y,z)\geq  \frac {2y}{\log y}  -  c_\tau(\log \log y)^{\tau} \frac {y}{(\log y)^2}   ;
\]
for some constant $c_\tau >0$;
\item If $1-\beta\leq 1/q^{\epsilon}$ and $\epsilon\to 0$ slowly with $q$ then 
\[
S(X,y,z)\geq  \frac {2y}{\log y}  -  (2/\epsilon+o(1)) \frac {y\log \log y}{(\log y)^2}   .  
\]
 \end{itemize}
 \end{prop}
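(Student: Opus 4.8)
\emph{Proof proposal.} The plan is to transport the Selberg--Iwaniec example $\mathcal A^{+}=\{n\le x:\lambda(n)=1\}$, the set that makes the upper bound in \eqref{eq: JR theorem} sharp, into a short interval, substituting for the completely multiplicative sign $\lambda$ a real primitive character $\chi\bmod q$ which possesses the exceptional zero $\beta$. Take $y=q^{A}$ for a parameter $A$ to be chosen and $z=y^{1/u}$ (so $y=z^{u}$); as soon as $A>u$ we have $q<z$, hence every $z$-rough integer is coprime to $q$ and carries a well-defined value $\chi(n)$, and for squarefree $z$-rough $n$ whose prime factors all satisfy $\chi(p)=-1$ one has $\chi(n)=(-1)^{\Omega(n)}=\lambda(n)$. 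The exceptional zero enters through the prime number theorem for arithmetic progressions \eqref{eq: PNTAPs}, which Deuring--Heilbronn repulsion renders effective already for $x$ a fixed power of $q$ (this effectiveness, unavailable to Siebert, being exactly the strength of \eqref{eq: PNTAPs}), and which gives $\#\{p\le t:\chi(p)=1\}=\tfrac12\,\mathrm{li}(t)\bigl(1-t^{\beta-1}\bigr)$ up to negligible error, so that $\chi$ agrees with $-1$ on all but a $\tfrac12(1-t^{\beta-1})$-proportion of the primes up to $t$.

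Next I would choose a (large) $X$, of the form $q^{B}$ with $B$ well above $A$, so that $(X,X+y]$ contains about $F(u)G(z)y$ integers that are products of an even number of primes each exceeding $z$, and would evaluate this count by comparison with the $\mathcal A^{\pm}$ examples rather than through closed forms for $F$ and $f$, as in the proof of Corollary~\ref{cor: SiegelGeneralv}. Writing $S^{+}=\#\{z\text{-rough }n\in(X,X+y]:\chi(n)=1\}$, this machinery produces
\[
S^{+}\ \ge\ F(u)G(z)y-\bigl(E_{\mathrm{trunc}}(T)+E_{\mathrm{leak}}(T,\beta)+o(1)\bigr)G(z)y ,
\]
where $T$ is a depth parameter (the number of prime factors carried through the Buchstab-type decomposition, governed by $B/A$), with $E_{\mathrm{trunc}}(T)$ decreasing in $T$ being the tail of the decomposition and the \emph{parity leakage} $E_{\mathrm{leak}}(T,\beta)$ increasing in $T$ measuring the damage done by the primes with $\chi(p)=1$ that enter the products; by the displayed count $E_{\mathrm{leak}}$ is controlled by $1-t^{\beta-1}$ with $t=q^{O(T)}$, hence by $1-\beta$ and $\log q$. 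Since sieving an interval has $|r(\mathcal A,d)|\le1$, Jurkat--Richert gives $S(X,y,z)\le(F(u)+o(1))G(z)y=(1+o(1))\tfrac{2y}{\log y}$ unconditionally for $1\le u\le3$, while $S(X,y,z)\ge S^{+}$; so the bound is essentially sharp and the problem reduces to minimising $E_{\mathrm{trunc}}(T)+E_{\mathrm{leak}}(T,\beta)$ over the admissible $T$ (which depend also on the residual Jurkat--Richert error of size $\asymp\tfrac{y\log\log y}{(\log y)^{2}}$).

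The four bullets are four instances of this minimisation. When $1-\beta\le\delta^{2}/\log q$, balancing the truncation term against the leakage forces $T\asymp1/\delta$ and yields a deficit $\asymp(\text{const})\,\delta$; tracing the $u$-dependence through the Buchstab function (which satisfies $u\omega(u)=1$ on $[1,2]$ and $u\omega(u)=1+\log(u-1)$ on $[2,3]$) identifies the constant as $2C(u)=2\sqrt{2(1-\log^{+}(u-1))}$, so that $S(X,y,z)\ge\tfrac{2y}{\log y}-(2\delta C(u)+o(1))\tfrac{y}{\log y}$, matching Corollary~\ref{cor: SiegelGeneralv} as $\delta\to0$. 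In the other three regimes the leakage is so much smaller that $T$ may be pushed further before $E_{\mathrm{leak}}$ catches up — the admissible $T$ growing like a power of $\log y$ when $1-\beta\le(\log q)^{-\kappa}$, like a power of $\log\log y$ when $1-\beta\le\exp(-(\log q)^{1/\tau})$, and $\asymp\epsilon\log y/\log\log y$ when $1-\beta\le q^{-\epsilon}$ — and substituting these into $E_{\mathrm{trunc}}$ (together with the residual sieve error) reproduces the stated $C_{\kappa}(u)(\log y)^{2/(\kappa+1)}\tfrac{y}{(\log y)^{2}}$, $c_{\tau}(\log\log y)^{\tau}\tfrac{y}{(\log y)^{2}}$, and $(2/\epsilon+o(1))\tfrac{y\log\log y}{(\log y)^{2}}$. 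Each is a routine if fussy computation once the shapes of $E_{\mathrm{trunc}}$ and $E_{\mathrm{leak}}$ are pinned down.

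The main obstacle is \eqref{eq: PNTAPs} in exactly the form required: one needs not merely $\psi(x,\chi)\sim-x^{\beta}/\beta$, but control of $\sum_{z\text{-rough }n\in(X,X+y]}\psi(n)$ for \emph{every} character $\psi\bmod q$, so as to confine the integers being counted to the residue classes on which $\chi$ takes the value $-1$; this is the point at which the Deuring--Heilbronn zero-repulsion has to be converted into an estimate effective down to $x$ a fixed power of $q$, the very place where Siebert's argument was lossy. A secondary, bookkeeping-heavy difficulty is extracting the exact constants — above all the identification of the first-regime constant with $C(u)=\sqrt{2(1-\log^{+}(u-1))}$; here one exploits that $F$ and $f$ are themselves assembled from the $\mathcal A^{\pm}$ examples, so the comparison can be made term by term rather than through the closed forms.
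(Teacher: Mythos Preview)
Your high-level instinct --- replace $\lambda$ by $\chi$ and let the Siegel zero force $\chi(p)=-1$ on most primes --- is the right one, and is indeed the engine behind Corollary~\ref{cor: 1stex} and Corollary~\ref{cor: SiegelGeneralv}. But the scaffolding you erect for Proposition~\ref{prop;Motohashi} contains a genuine gap.

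The central problem is your ``depth parameter $T$'' and the truncation/leakage trade-off. In the regime $1\le u\le 3$ every $z$-rough integer $n\le x=qy$ has at most two prime factors, so there is no Buchstab recursion to truncate: the whole count is $\pi_1+\pi_2$, full stop. What the paper actually does is reuse the explicit computation from the proof of Proposition~\ref{prop; Iwaniec}, now with $\chi(a)=-1$, to get
\[
\phi(q)\,N(x,z;q,a)=(2+O(\lambda))\frac{x}{\log x}-(1-\beta)\,x\bigl(1-\log^+(u-1)+O(\Delta+\lambda)\bigr),
\]
and then transfer from the arithmetic progression to an interval by CRT exactly as in the proof of Corollary~\ref{cor: SiegelGeneralv}. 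The two competing errors are therefore not ``truncation'' versus ``leakage'' but rather (i) the shift from $\log(qy)$ to $\log y$, of size $1/A$ where $y=q^A$, and (ii) the Siegel contribution, which in the right normalisation is $C(u)^2/(4B)$ with $1-\beta=1/(B\log y)$. Optimising over the \emph{single} free parameter $A$ --- namely $A=\tfrac{2}{C(u)}((1-\beta)\log q)^{-1/2}$ --- yields the master inequality
\[
S(X,y,z)\ge \frac{2y}{\log y}\Bigl(1-C(u)\sqrt{(1-\beta)\log q}+o(1)\Bigr)=\frac{2y}{\log y}-(1+o(1))\,\frac{4y\log q}{(\log y)^2},
\]
and all four bullets then drop out by inverting the relation between $A$, $\log q$ and $\log y$ under each hypothesis on $1-\beta$. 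The constant $C(u)=\sqrt{2(1-\log^+(u-1))}$ arises not from the Buchstab function but directly from the coefficient $1-\log^+(u-1)$ in the $\pi_1+\pi_2$ calculation above.

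Two secondary misconceptions. First, $X$ is not chosen as $q^B$ with $B$ large; the paper takes $x=qy$ and \emph{constructs} $X$ by CRT from the residue class $a\pmod q$ together with well-chosen residues modulo the primes dividing $q$. Second, your inequality $S(X,y,z)\ge S^+$ is true but unhelpful: $S^+$ as you define it picks up only about half of $S(X,y,z)$, not the full $2y/\log y$. The factor $2$ in the paper comes instead from the fact that for $\chi(a)=-1$ the progression $a\pmod q$ carries essentially twice its fair share of $z$-rough integers, and one bounds $S(X,y,z)\ge\tfrac{\phi(q)}{q}\,N(qy,z;q,a)$.
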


 One consequence of the first part is that if one can show that for all integers $x$ and $y$ sufficiently large we have
 \[
  S(x,y,y^{1/2}) \leq (2-\eta) \frac {y}{\log y}
 \]
 then  any real zeros $\beta$ of $L(s,\chi)$ for   a primitive quadratic character $\chi \pmod q$ satisfy 
 \[
 \beta\leq 1-\frac {\eta^2+o(1)} {8\log q}; 
 \]
 that is, there are no Siegel zeros.
 (Again, this is closely related to the work of  Motohashi \cite{Mo}.)

 In \cite{Se2}   Selberg noted that he could find examples for $u\leq 3$ with 
 \[
 S(\mathcal A,z)  \geq  \frac {2y}{\log y}\bigg( 1-  \frac {c(\log\log y)^2}{\log y} \bigg) 
 \]
 (which we obtain from Proposition \ref{prop;Motohashi} if $1-\beta = \exp(-(\log q)^{1/2+o(1)})$), and he states that he thought he could reduce the  $(\log\log y)^2$ to $\log\log y$ in the secondary term. Thanks to Siegel's Theorem this is (just) beyond the realms of possibility with our construction (see the last part of Proposition \ref{prop;Motohashi}).
 
 It  is feasible that the limits
 \[
 \lim_{y\to \infty} \max_x S(x,y,y^{1/u})\bigg/\frac {y}{\log y} \text{ and }  \lim_{y\to \infty} \min_x S(x,y,y^{1/u})\bigg/\frac {y}{\log y}
 \]
might not exist; indeed if the extremal examples all come from Siegel zeros (as in this paper), and if Siegel zeros are very spaced out (as we might expect if they do exist), then these limits will not exist. Therefore, one  needs to work with $\limsup$ and $\liminf$, respectively, in this kind of formulation of our results
(Selberg \cite{Se2} made the analogous point about $S(\mathcal A,z)$.)

 Tao wrote in his blog:\footnote{https://terrytao.wordpress.com/2007/06/05/open-question-the-parity-problem-in-sieve-theory/}
 ``\textit{The parity problem can also be sometimes overcome when there is an exceptional Siegel zero} ... [this] \textit{suggests that to break the parity barrier, we may assume without loss of generality that there are no Siegel zeroes}'' (see also section 1.10.2 of \cite{Tao7}). The results of this article suggest that this claim needs to be treated with caution, since its truth depends on the context.


 \section{Exceptional zeros}

 Landau proved that  there exists a constant $c>0$ such that if $Q$ is sufficiently large then there can be no
more than one modulus $q\leq Q$, one primitive real character $\chi \pmod {q}$
and one real number $\beta$ for which $L(\beta,\chi)=0$, with
$$
\beta \geq  1 - \frac c{\log Q} \ .
$$
These are the so-called \emph{exceptional zeros} (or \emph{Siegel zeros}); we do not believe that they exist (as they would contradict the Generalized Riemann Hypothesis), but they are the most egregious putative zeros that we cannot discount.

If we  assume that exceptional zeros exist then there are some surprising but simple organizing principles:
\begin{itemize}
\item We can assume that if they exist then there are infinitely many, else we simply change the value of $c$ to  $c=\frac 12\min_q (1-\beta)\log q$ and then there are no exceptional zeros.
\item  We may also assume that one can take $c$ arbitrarily small for if one does not have exceptional zeros for a small enough $c$ then we are done.
\end{itemize}

So henceforth we will assume  that \emph{there are infinitely many Siegel zeros}; that is, for any $\kappa >0$  arbitrarily small, there is a sequence $(q_j,\chi_j,\beta_j)_{j\geq 1}$ such that 
\begin{equation} \label{eq: Skonstant}
\beta_j \geq  1 - \frac \kappa{\log q_j} \text{ for all } j\geq 1.
\end{equation}
We now plug this zero into the explicit formula for primes in arithmetic progressions. To do this we slightly modify section 18.4 of \cite{IK}:

\begin{lemma} \label{lem: pntIK}
There exists a (large) constant $A$ such that if 
there is a Siegel zero $\beta$ of a real quadratic character mod $q$ and
\[
q^A \leq x\leq e^{1/(1-\beta)}
\]
with $(a,q)=1$ then 
\begin{equation} \label{eq: PNTAP.IK}
\phi(q) \psi(x;q,a) = (1-\chi(a))x +   (1-\beta)  (\chi(a)+  O( (1-\beta)\log x )) x(\log x-1)
\end{equation}
\end{lemma}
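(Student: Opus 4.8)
The plan is to run $\psi(x;q,a)$ through the truncated explicit formula for the Dirichlet $L$-functions mod $q$, retain only the pole of $\zeta$ and the exceptional zero $\beta$ as main terms, bound everything else by the Deuring--Heilbronn repulsion, and finish with a Taylor expansion of $x^{\beta}/\beta$.

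\emph{Setting up.} First I would use orthogonality of the characters mod $q$ to write
\[
\phi(q)\,\psi(x;q,a)=\sum_{\chi'\bmod q}\overline{\chi'}(a)\,\psi(x,\chi'),\qquad \psi(x,\chi'):=\sum_{n\le x}\Lambda(n)\chi'(n),
\]
and then apply the truncated explicit formula to each $\psi(x,\chi')$, with truncation height $T=x^{\eta}$ for a small fixed $\eta>0$. Up to a total error $O\big(\phi(q)(x(\log qx)^2/T+\log x)\big)$, this expresses $\phi(q)\psi(x;q,a)$ as the main term $x$ (the pole of $\zeta$, which sits inside $L(s,\chi_0)$) plus $-\sum_{\chi'}\overline{\chi'}(a)\sum_{|\Im\rho|\le T}x^{\rho}/\rho$ over the nontrivial zeros $\rho$ of the $L(s,\chi')$ --- those for $\chi_0$ being the zeros of $\zeta$. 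Since $\chi$ is real, $\overline{\chi}(a)=\chi(a)$, so the single zero $\rho=\beta$ of $L(s,\chi)$ produces the second main term $-\chi(a)x^{\beta}/\beta$, and we are left with
\[
\phi(q)\,\psi(x;q,a)=x-\chi(a)\,\frac{x^{\beta}}{\beta}+E,
\]
where $E$ gathers the remaining zeros together with the elementary error.

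\emph{Bounding $E$ --- the crux.} By Landau's theorem, $\beta$ is the only zero of $\prod_{\chi'\bmod q}L(s,\chi')$ with $\Re\rho>1-c/\log(q(|\Im\rho|+2))$; and because $(1-\beta)\log q$ is small (a genuine Siegel zero), the Deuring--Heilbronn repulsion widens this region, forcing every other zero $\rho$ --- of each $L(s,\chi')$, and of $\zeta$ --- to the left of $1-\frac{c\log\big(1/((1-\beta)\log q)\big)}{\log(q(|\Im\rho|+2))}$. I would feed this into the zero-sums: estimating $\sum_{\rho}x^{\Re\rho}/|\rho|$ dyadically in $|\Im\rho|$ (the bulk coming from the low-lying zeros, where the zero-free strip is widest) and summing over the $\phi(q)$ characters --- a log-free zero-density estimate keeps this clean, though a cruder count losing a fixed power of $q$ also works, since Siegel's theorem will absorb it --- bounds the zero-contributions by $\ll x\big((1-\beta)\log q\big)^{c'}$ for a constant $c'=c'(A)\to\infty$ as $A\to\infty$. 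As $(1-\beta)\log q<1$ and $(1-\beta)\log x\le1$, this is $\le(1-\beta)^2(\log x)^2 x$ as soon as $c'\ge2$, i.e.\ for $A$ a sufficiently large absolute constant. The elementary terms $\phi(q)x(\log qx)^2/T$ and $\phi(q)\log x$ are a power of $q$ times something of size $x^{1-\eta}$ or smaller; using $x\ge q^A$ and Siegel's theorem ($1-\beta\gg_{\epsilon}q^{-\epsilon}$) to bound $1-\beta$ from below, these are $\ll(1-\beta)^2(\log x)^2 x$ as well. Hence $E\ll(1-\beta)^2(\log x)^2 x$. I expect this to be the main obstacle: the Deuring--Heilbronn strip must be wide enough to beat the truncation $x\le e^{1/(1-\beta)}$, and one needs $A$ large (plus Siegel) to swallow the residual $q$-powers.

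\emph{Finishing.} On the range $q^A\le x\le e^{1/(1-\beta)}$ we have $0<u:=(1-\beta)\log x\le1$, so $x^{\beta}=xe^{-u}=x(1-u+O(u^2))$ while $1/\beta=1+(1-\beta)+O((1-\beta)^2)$, whence
\[
\frac{x^{\beta}}{\beta}=x-(1-\beta)(\log x-1)\,x+O\big((1-\beta)^2(\log x)^2 x\big).
\]
Substituting this, together with the bound for $E$, into $\phi(q)\psi(x;q,a)=x-\chi(a)x^{\beta}/\beta+E$, and using $\chi(a)^2=1$, everything collapses to
\[
\phi(q)\,\psi(x;q,a)=(1-\chi(a))x+(1-\beta)\big(\chi(a)+O((1-\beta)\log x)\big)x(\log x-1),
\]
which is \eqref{eq: PNTAP.IK}.
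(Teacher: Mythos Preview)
Your proof is correct and follows essentially the same route as the paper's. The paper simply cites equation (18.82) of Iwaniec--Kowalski (which packages the explicit formula together with Deuring--Heilbronn repulsion and the log-free zero-density estimate) with a specific truncation $T$ satisfying $x=(qT)^{A/2}$, then invokes Siegel's theorem to absorb the $x^{1-c_0}$ error and Taylor-expands $x^\beta/\beta$; you have unpacked the IK citation and carried out the same steps explicitly, with the cosmetic difference of taking $T=x^\eta$.
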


We deduce, by partial summation and the prime number theorem, that 
\begin{equation} \label{eq: PNTAP.IK2}
\phi(q) \pi(x;q,a) = (1-\chi(a))\pi(x) +   (1-\beta)  (\chi(a)+  O( (1-\beta)\log x )) x 
\end{equation}
for $q^{A+c}\leq x \leq e^{1/(1-\beta)}$.

\begin{proof}  We let $A=2(c_2+1/c_3)$ with $c_2, c_3$ as in \cite{IK} and then select $T$ so that 
$x=(qT)^{A/2}$, which ensures that $T\geq q$. Then (18.82) of \cite{IK} (with our value of $T$ replacing theirs) yields that
\[
\phi(q) \psi(x;q,a) = x-\chi(a) \frac{x^\beta}\beta + O( ((1-\beta)\log x)^2x+   x^{1-c_0} )
\]
for some constant $c_0>0$, since $(\frac{(qT)^{c_2}}x)^\eta =(qT)^{-\eta/c_3}=(1-\beta)\log qT\ll (1-\beta)\log x$
in the calculation in \cite{IK} with $\eta=c_3\frac{|\log((1-\beta)\log qT)|}{ \log qT}$ as in (18.11) of \cite{IK}.
Therefore \eqref{eq: PNTAP.IK} follows since Siegel's theorem implies that $1-\beta\gg_\epsilon q^{-\epsilon}\geq x^{-\epsilon}$, so our second error term is smaller than the first, and then we estimate the main term from its Taylor series.
\end{proof}

\begin{cor}  
 There exists a (large) constant $A$ such that  if
there is a Siegel zero $\beta$ of a real quadratic character $\chi$ mod $q$ satisfying $\beta \geq  1 - \frac \kappa{\log q} $
 then for any 
 \begin{equation} \label{eq: range}
x\geq q^A \text{ with } (1-\beta)\log x\leq \Delta =\Delta_\beta(x)
\end{equation}
and  $(a,q)=1$ we have
\begin{equation} \label{eq: PNTAPs}
\pi(x;q,a)= \frac{\pi(x)}{\phi(q)}  \cdot \begin{cases}
O(\Delta) & \text{ if }  \chi(a)=1;\\
2+O(\Delta) &\text{ if }  \chi(a)=-1.
\end{cases}
\end{equation}
In particular, for fixed $\Delta\in (0,1]$ and  $B>A$   we can take $\kappa=\Delta/B$ and \eqref{eq: PNTAPs} holds for all $x$ in the range $q^A\leq x\leq q^B$.
\end{cor}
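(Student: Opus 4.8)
The plan is to feed the asymptotic \eqref{eq: PNTAP.IK2} directly into the corollary and to rewrite its secondary term as a multiple of $\pi(x)/\phi(q)$. After enlarging the constant $A$ of Lemma~\ref{lem: pntIK} by the constant $c$ occurring in the range of \eqref{eq: PNTAP.IK2}, the hypotheses $x\ge q^{A}$ and $(1-\beta)\log x\le\Delta\le 1$ place $x$ inside the range $q^{A+c}\le x\le e^{1/(1-\beta)}$ in which \eqref{eq: PNTAP.IK2} is valid: the lower bound is immediate, and the upper bound is just the inequality $(1-\beta)\log x\le 1$, which follows from $\Delta\le1$. (It is worth noting that the conclusion is only meaningful when $\Delta$ is bounded, which is why the quantitative version of the statement is phrased for $\Delta\in(0,1]$.)

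Dividing \eqref{eq: PNTAP.IK2} by $\phi(q)$ gives
\[
\pi(x;q,a)=\frac{(1-\chi(a))\pi(x)}{\phi(q)}+\frac{(1-\beta)x}{\phi(q)}\bigl(\chi(a)+O((1-\beta)\log x)\bigr).
\]
Since $|\chi(a)|=1$ and $(1-\beta)\log x\le\Delta\le1$, the factor $\chi(a)+O((1-\beta)\log x)$ is $O(1)$, so the second term is $O\bigl((1-\beta)x/\phi(q)\bigr)$. By the Chebyshev estimate $x\ll\pi(x)\log x$ (the prime number theorem would serve equally well) this is
\[
O\!\left((1-\beta)\log x\cdot\frac{\pi(x)}{\phi(q)}\right)=O(\Delta)\cdot\frac{\pi(x)}{\phi(q)}.
\]

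Now I would split according to the value of $\chi(a)$. If $\chi(a)=1$ then $1-\chi(a)=0$, the main term vanishes, and $\pi(x;q,a)=O(\Delta)\,\pi(x)/\phi(q)$; if $\chi(a)=-1$ then $1-\chi(a)=2$ and $\pi(x;q,a)=(2+O(\Delta))\,\pi(x)/\phi(q)$. This is precisely \eqref{eq: PNTAPs}. For the final clause, fix $\Delta\in(0,1]$ and $B>A$ and set $\kappa=\Delta/B$; if $\beta\ge1-\kappa/\log q$ and $q^{A}\le x\le q^{B}$, then $\log x\le B\log q$, hence $(1-\beta)\log x\le(\Delta/(B\log q))\cdot B\log q=\Delta\le1$, so the condition \eqref{eq: range} holds and \eqref{eq: PNTAPs} is valid throughout $q^A\le x\le q^B$.

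There is no serious obstacle here; the only points demanding attention are the bookkeeping of ranges (absorbing the $q^{c}$ loss of Lemma~\ref{lem: pntIK} into $A$, and confirming that $\Delta\le1$ keeps $x\le e^{1/(1-\beta)}$) and the conversion of the ``$(1-\beta)x$'' size of the error term into ``$\Delta\,\pi(x)/\phi(q)$'', which is nothing more than the substitution $x\asymp\pi(x)\log x$.
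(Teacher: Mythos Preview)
Your argument is correct and follows exactly the route the paper takes: apply \eqref{eq: PNTAP.IK2}, use $x\asymp\pi(x)\log x$ (the paper cites the prime number theorem, you note Chebyshev suffices) to convert the secondary term into $O(\Delta)\,\pi(x)/\phi(q)$, and then verify $(1-\beta)\log x\le\kappa\,\tfrac{\log x}{\log q}\le\kappa B=\Delta$ for the final clause. Your version simply fills in the bookkeeping (absorbing $c$ into $A$, checking $\Delta\le1$ keeps $x\le e^{1/(1-\beta)}$) that the paper leaves implicit.
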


\begin{proof} The first part follows immediately from \eqref{eq: PNTAP.IK2} and the prime number theorem.
For the last part note that $(1-\beta)\log x\leq  \kappa\frac {\log x} {\log q}  \leq \kappa B=\Delta$ by hypothesis.
\end{proof}

 Let $\pi_{k}(x,z)$   be the number of integers  $n\leq x$ where $n=p_1\cdots p_k$ and 
 the $p_i$ are all primes $>z$; and 
 $\pi_{k}(x,z;q,a)$ to be the number of these integers that are $\equiv a \pmod q$.
 Let $\pi_{k,q}(x,z)$ be those that are coprime to $q$.
 These definitions are of interest when $x>2z^k$ in which case one can deduce immediately from the prime number theorem that 
 if $x\leq z^{O(1)}$ then
 \begin{equation} \label{eq: EasyBd}
 \pi_{k,q}(x,z) \asymp_k \frac x{\log z}.
  \end{equation}
 
 \begin{theorem} \label{thm: pik} There exists a (large) constant $A$ such that 
  if there is a Siegel zero $\beta$ of a real quadratic character $\chi$ mod $q$ satisfying $\beta \geq  1 - \frac \kappa{\log q} $
 then for any $z$ in the range \eqref{eq: range}, and any fixed $u>1$, if $k$ is a positive integer $<u$ with $x=z^u$ and $(a,q)=1$ then  we have, uniformly,
 \[
\pi_{k}(x,z;q,a)= (1+(-1)^k\chi(a)+ O(\Delta_\beta(x)))\frac{\pi_{k,q}(x,z)}{\phi(q)}   .
\] 
 \end{theorem}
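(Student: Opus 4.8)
The plan is to peel off the \emph{largest} prime factor of $n$ and feed the resulting residue condition straight into \eqref{eq: PNTAPs}. Detecting $n\equiv a\pmod q$ by Dirichlet characters is the obvious first move, but it wastes a factor $\phi(q)$ in the error term, since \eqref{eq: PNTAPs} controls only one residue class at a time; I will avoid it.

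Write each $n$ counted by $\pi_{k,q}(x,z)$ uniquely as $n=mp$ with $p=P^+(n)$ its largest prime factor. Then $m$ runs over products of $k-1$ primes, all $>z$, and $p$ over primes with $P^+(m)\le p\le x/m$ (use the convention $P^+(1)=z$ when $k=1$). Since every prime involved exceeds $z\ge q^A>q$ we have $(m,q)=(n,q)=1$, so $n\equiv a\pmod q$ is equivalent to $p\equiv a\bar m\pmod q$, where $\bar m m\equiv1\pmod q$. Hence
\[
\pi_k(x,z;q,a)=\sum_{m}\#\{\,p\text{ prime}:\ P^+(m)\le p\le x/m,\ p\equiv a\bar m\pmod q\,\},
\]
the sum over $m$ as above. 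For each $m$ both endpoints $P^+(m)$ and $x/m$ lie in $[q^A,x]$, and since $(1-\beta)\log Y\le(1-\beta)\log x\le\Delta$ for $Y\le x$, the estimate \eqref{eq: PNTAPs} is available throughout with the single parameter $\Delta=\Delta_\beta(x)$; using $\chi(a\bar m)=\chi(a)\chi(m)$ it tells us the inner count equals $\tfrac{1-\chi(a)\chi(m)}{\phi(q)}$ times the number of primes in $[P^+(m),x/m]$, up to an error $O(\Delta\,\pi(x/m)/\phi(q))$.

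Now sum over $m$. Only $m$ with $P^+(m)\le x/m$ contribute, and since $m$ is a product of $k-1$ primes each $\le P^+(m)$ we have $P^+(m)\ge m^{1/(k-1)}$, forcing $m\le x^{(k-1)/k}$; for such $m$, $x/m\ge x^{1/k}$, so $\log(x/m)\gg_{k,u}\log z$, hence $\pi(x/m)\ll_{k,u}(x/m)/\log z$, and $\sum_m 1/m\le(\sum_{z<p\le x}1/p)^{k-1}=(\log u+O(1/\log z))^{k-1}\ll_{k,u}1$ by Mertens. So the accumulated errors from \eqref{eq: PNTAPs} total $\ll_{k,u}\Delta x/(\phi(q)\log z)\ll_{k,u}\Delta\,\pi_{k,q}(x,z)/\phi(q)$ by \eqref{eq: EasyBd}. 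For the main term, split $1-\chi(a)\chi(m)=(1+(-1)^k\chi(a))+\chi(a)((-1)^{k+1}-\chi(m))$; the second bracket vanishes unless $m$ has a prime factor $p$ with $\chi(p)=+1$, in which case it is $O(1)$. The sum of the prime-counts over all contributing $m$ is $\pi_{k,q}(x,z)$ up to negligible corrections (from $n$ divisible by the square of its largest prime, and from the boundary $m\le x^{(k-1)/k}$), so the first bracket yields $\tfrac{1+(-1)^k\chi(a)}{\phi(q)}\pi_{k,q}(x,z)$; for the second, the contribution of $m$ with a prime factor of $\chi$-value $+1$ is $\ll_{k,u}\frac{x}{\phi(q)\log z}\sum_{z<p,\ \chi(p)=+1}\tfrac1p\ll_{k,u}\frac{\Delta}{\phi(q)}\pi_{k,q}(x,z)$, using that by \eqref{eq: PNTAPs} (summed over the $\phi(q)/2$ classes with $\chi(c)=1$) the number of primes $p\le Y$ with $\chi(p)=+1$ is $\ll\Delta\pi(Y)$, followed by partial summation. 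Collecting everything,
\[
\pi_k(x,z;q,a)=\frac{1+(-1)^k\chi(a)}{\phi(q)}\pi_{k,q}(x,z)+O_{k,u}\!\Big(\tfrac{\Delta}{\phi(q)}\pi_{k,q}(x,z)\Big)=\big(1+(-1)^k\chi(a)+O_{k,u}(\Delta)\big)\frac{\pi_{k,q}(x,z)}{\phi(q)},
\]
since $1+(-1)^k\chi(a)\le2$ is absorbed into the error; as $1\le k<u$ this is the claim, uniformly in $a$, in $z$, and in the character $\chi$, with implied constant depending only on $u$.

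The routine inputs are \eqref{eq: EasyBd}, Mertens' theorem, and Siegel's bound $1-\beta\gg_\epsilon q^{-\epsilon}$ (needed only to check that the various ``negligible'' terms really are $o\big(\Delta\,\pi_{k,q}(x,z)\big)$). The step requiring the most care is the error bookkeeping: one must notice that $P^+(m)\le x/m$ confines $m$ to $m\le x^{(k-1)/k}$, which keeps $\log(x/m)$ comparable to $\log z$ — otherwise small values of $x/m$ would inflate $\sum_m\pi(x/m)$ by a stray factor $\log\log z$. The only other pitfall is conceptual: resisting the Dirichlet-character expansion, which looks natural but would turn the final error $O(\Delta)$ into the useless $O(\phi(q)\Delta)$, precisely because \eqref{eq: PNTAPs} only pins down one residue class at a time.
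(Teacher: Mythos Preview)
Your proof is correct and reaches the conclusion by a somewhat different route than the paper. The paper argues by induction on $k$: it writes
\[
\pi_k(x,z;q,a)=\frac1k\sum_{\substack{z<p\le x/z^{k-1}\\ p\nmid q}}\pi_{k-1}(x/p,z;q,a/p)+O(\text{squares}),
\]
applies the induction hypothesis to each $\pi_{k-1}$, and then isolates the contribution of the (rare) primes with $\chi(p)=+1$ in essentially the same way you do. You instead peel off the \emph{largest} prime factor, which makes the decomposition $n=mp$ a genuine bijection and lets you apply \eqref{eq: PNTAPs} once, directly to the inner prime variable, with no induction. This buys you a slightly cleaner error analysis --- no accumulation over $\lfloor u\rfloor$ inductive steps, no $1/k$ symmetrization, no separate treatment of repeated-prime terms --- and in fact your ``negligible corrections from $n$ divisible by the square of its largest prime, and from the boundary'' are not actually needed: the map $(m,p)\mapsto mp$ with $P^+(m)\le p\le x/m$ is an exact bijection onto the integers counted by $\pi_{k,q}(x,z)$, so $\sum_m(\pi(x/m)-\pi(P^+(m)-1))=\pi_{k,q}(x,z)$ on the nose. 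The paper's inductive approach, by contrast, makes the recursive structure more visible and would adapt more readily if one wanted to insert other multiplicative weights.
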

 
  \begin{proof} We saw above that this is true for $k=1$. Then we proceed by induction on $k>1$. We write $n=mp$ so that 
\[
\pi_{k}(x,z;q,a) = \frac 1k \sum_{\substack{ z<p\leq x/z^{k-1}\\ p\nmid q}} \pi_{k-1}(x/p,z;q,a/p) + \text{Error} ;
\]
where the error   comes from terms on the right-hand side  of the form $rp^2$, and
$r$ is counted by $\pi_{k-2}(x/p^2,z;q,a/p^2)\ll  x/{qp^2}$.  Therefore,  
\[
\text{Error} \ll  \frac 1k \sum_{p>z} \frac{x}{qp^2} \ll \frac x{kq z\log z} \ll \frac{\pi_{k,q}(x,z)}{z\phi(q)} \ll \Delta \frac{\pi_{k,q}(x,z)}{\phi(q)} 
\]
by Siegel's Theorem as $1/z\leq q^{-A}\ll 1-\beta \leq \Delta:=\Delta_\beta(x)$.
By the induction hypothesis, we then have 
\[
\pi_{k}(x,z;q,a) = \frac 1k \sum_{\substack{ z<p\leq x/z^{k-1}\\ p\nmid q}}(1-(-1)^{k}\chi(a/p)+ O(\Delta))\frac{\pi_{k-1,q}(x/p,z)}{\phi(q)} +O\bigg(\Delta   \frac{\pi_{k,q}(x,z)}{\phi(q)} \bigg) .
\]
Summing over $(a,q)=1$ we also have
\[
 \frac 1k \sum_{\substack{ z<p\leq x/z^{k-1}\\ p\nmid q}} \pi_{k-1,q}(x/p,z) = (1 + O(\Delta))\pi_{k,q}(x,z) .
\]
We can obtain this complete sum from our equation for $\pi_{k}(x,z;q,a)$ plus some extra terms as follows:
\begin{align*}
\pi_{k}(x,z;q,a) &= \frac {1+(-1)^{k}\chi(a)+ O(\Delta)} k \sum_{\substack{ z<p\leq x/z^{k-1}\\ p\nmid q}} \frac{\pi_{k-1,q}(x/p,z)}{\phi(q)} \\
&\qquad - 2\frac {(-1)^{k}\chi(a) }k  \sum_{\substack{ z<p\leq x/z^{k-1}\\ \chi(p)=1}} \frac{\pi_{k-1,q}(x/p,z)}{\phi(q)} +O\bigg(\Delta   \frac{\pi_{k,q}(x,z)}{\phi(q)} \bigg) .
\end{align*}
The first line of the right-hand side gives the main term in the result, and the last error term is acceptable.
By \eqref{eq: EasyBd} the second sum is 
\[
\ll_k  \sum_{\substack{ z<p\leq x/z^{k-1}\\ \chi(p)=1}}      \frac {x/p} {\phi(q) \log z} \ll  \frac {x} {\phi(q) \log z} \cdot \Delta \log \bigg( \frac{\log x/z^{k-1} }{\log z } \bigg)  \ll  \Delta  \frac {\pi_{k,q}(x,z)} {\phi(q)  }
\]
using partial summation on \eqref{eq: PNTAPs} (for each $a$ with $\chi(a)=1$)  to obtain the upper bound on the sum over primes, and then \eqref{eq: EasyBd} for the last inequality, and since $x=z^{O(1)}$.   This is acceptable in our error term, and the error term can be given uniformly since we iterate this process only finitely often because of the range for $k$.
  \end{proof}

 Now let $N(x,z)$   be the number of integers  $n\leq x$ all of whose prime factors are $>z$, 
 $N(x,z;q,a)$   be the number of these integers that are $\equiv a \pmod q$, and $N_q(x,z)$ those coprime with $q$.
 Let $N(x,z)^\pm$ count these integers with $\lambda(n)=\mp 1$, and then 
  $N_{q}(x,z)^\pm$  those coprime with $q$.  
  
  \begin{cor} \label{cor: 1stex}
  There exists a (large) constant $A$ such that for any constants $u>1$ which is not an integer and $\Delta>0$, assume there are
infinitely many Siegel zeros $\beta$ of   real quadratic characters $\chi$ mod $q$ satisfying $\beta \geq  1 - \frac \kappa{\log q} $
with $\kappa=\Delta/Au$.   For $z=x^{1/u}=q^A$  we have, uniformly,
  if $\chi(a)=-1$ then
 \[
N(x,z;q,a)=     ( F(u) + O(\Delta ))  \cdot \frac{1}{\phi(q)}  \prod_{p\leq z} \bigg( 1 -\frac{1}p \bigg) x
\] 
 and  if $\chi(a)=1$ then
 \[
N(x,z;q,a)=    ( f(u) + O(\Delta)    )  \cdot \frac{1}{\phi(q)}  \prod_{p\leq z} \bigg( 1 -\frac{1}p \bigg) x.
\] 
 \end{cor}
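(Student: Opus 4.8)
The plan is to expand $N(x,z;q,a)$ as the (finite) sum $\sum_k\pi_k(x,z;q,a)$, apply Theorem~\ref{thm: pik} termwise, and then recognise the surviving main sums as the Selberg--Iwaniec quantities $N(x,z)^\pm=S(\mathcal A^\pm,z)$, whose sizes are supplied by \eqref{eq: Selbergexample} and the prime number theorem. Two reductions come first. Since $z=q^A$ with $A>1$, every $n\le x$ all of whose prime factors exceed $z$ is automatically coprime to $q$, so $\pi_{k,q}(x,z)=\pi_k(x,z)$, $N_q(x,z)^\pm=N(x,z)^\pm$ and $N_q(x,z)=N(x,z)$. And if $n=p_1\cdots p_k$ with each $p_i>z$ and $n\le x=z^u$, then $z^k<z^u$, so $k<u$; as $u\notin\ZZ$ this means $0\le k\le\lfloor u\rfloor$. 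Peeling off $k=0$ (which contributes $1$ if $a\equiv1\pmod q$ and $0$ otherwise, hence $O(1)$), we have $N(x,z;q,a)=\sum_{1\le k<u}\pi_k(x,z;q,a)+O(1)$. Finally, the hypothesis $\kappa=\Delta/(Au)$ places us in the range \eqref{eq: range}, since $(1-\beta)\log x\le\frac{\kappa}{\log q}\cdot Au\log q=\Delta$, so we may take $\Delta_\beta(x)\le\Delta$ and Theorem~\ref{thm: pik} applies to each $k$ with $1\le k<u$.

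Summing the resulting identities $\pi_k(x,z;q,a)=\bigl(1+(-1)^k\chi(a)+O(\Delta)\bigr)\pi_k(x,z)/\phi(q)$ over $1\le k<u$, and using $\sum_{1\le k<u}\pi_k(x,z)=N(x,z)+O(1)$ together with $\sum_{k\ \mathrm{odd}}\pi_k(x,z)=N(x,z)^+$ and $\sum_{k\ \mathrm{even},\,k\ge2}\pi_k(x,z)=N(x,z)^-+O(1)$, I would argue as follows. When $\chi(a)=-1$ the factor $1+(-1)^k\chi(a)=1-(-1)^k$ equals $2$ for odd $k$ and $0$ for even $k$, so
\[
N(x,z;q,a)=\frac{2\,N(x,z)^+}{\phi(q)}+O\!\Bigl(\frac{\Delta\,N(x,z)}{\phi(q)}\Bigr)+O(1),
\]
while when $\chi(a)=1$ the same computation produces the identical formula with $N(x,z)^+$ replaced by $N(x,z)^-$.

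To finish, I would feed in the Selberg--Iwaniec evaluation. Since $N(x,z)^\pm=S(\mathcal A^\pm,z)$ and $\#\mathcal A^\pm=\tfrac x2+O\bigl(x/(\log x)^{10}\bigr)$ by the prime number theorem (because $\sum_{n\le x}\lambda(n)=o(x)$), \eqref{eq: Selbergexample} gives $2N(x,z)^+=(F(u)+o(1))G(z)x$ and $2N(x,z)^-=(f(u)+o(1))G(z)x$ with $G(z)=\prod_{p\le z}(1-1/p)$, and in particular $N(x,z)\ll G(z)x$. Substituting into the two displays of the previous paragraph, and noting that $G(z)x/\phi(q)\to\infty$ absorbs the $O(1)$, yields $N(x,z;q,a)=(F(u)+o(1)+O(\Delta))G(z)x/\phi(q)$ when $\chi(a)=-1$ and the analogue with $f(u)$ when $\chi(a)=1$. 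The only delicate point is the passage from ``$o(1)$'' to ``$O(\Delta)$'': both the $o(1)$ in \eqref{eq: Selbergexample} and the error in the prime number theorem tend to $0$ as $x\to\infty$, whereas $\Delta$ is a fixed positive constant; since the claim is asserted only along the given sequence $q=q_j\to\infty$ (with $x=q_j^{Au}\to\infty$), these vanishing quantities are $\le\Delta$ for all large $j$ and are swallowed by the $O(\Delta)$. I do not anticipate a real obstacle: the substance is already in Theorem~\ref{thm: pik} and in the $\mathcal A^\pm$ examples, and what remains is the bookkeeping above.
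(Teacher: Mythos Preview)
Your proposal is correct and follows essentially the same route as the paper: sum Theorem~\ref{thm: pik} over $k<u$, use $q<z$ to identify $N_q(x,z)^\pm$ with $N(x,z)^\pm=S(\mathcal A^\pm,z)$, and then invoke \eqref{eq: Selbergexample} together with $\#\mathcal A^\pm\sim x/2$. You have in fact been more careful than the paper in handling the $k=0$ term, in checking that the range hypothesis \eqref{eq: range} is met, and in explaining why the $o(1)$ from \eqref{eq: Selbergexample} can be absorbed into $O(\Delta)$ along the sequence $q_j\to\infty$.
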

 
  \begin{proof}  Theorem \ref{thm: pik}  implies that 
  \[
N(x,z;q,a)=  \frac{1}{\phi(q)}   ( 2N_q(x,z)^\pm + O(\Delta   N_q(x,z) )
\] 
since $N(x,z;q,a)=\sum_{k\leq u} \pi_{k}(x,z;q,a)$ and $N_{q}(x,z)=\sum_{k\leq u} \pi_{k,q}(x,z)$.
  Now the integers $n$ counted in $N(x,z)$ belong to congruence classes coprime to $q$, and are already coprime to all of the primes $\leq z$ that do not divide $q$. Since $q<z$ we deduce that $N_q(x,z)^\pm=N(x,z)^\pm$, and $N_q(x,z) =N(x,z) $
  
  Now $N(x,z)^\pm=S(\mathcal A^\pm,z)$ and so, applying \eqref{eq: Selbergexample} we have
 \[
 N(x,z)^- \sim f(u) \cdot \prod_{p\leq z} \bigg( 1 -\frac{1}p \bigg)  \frac x2 \text{ and } N(x,z)^+ \sim F(u) \cdot \prod_{p\leq z} \bigg( 1 -\frac{1}p \bigg)  \frac x2 
 \]
 as $ \# \mathcal A^+, \# \mathcal A^-\sim \frac x2$ by the prime number theorem.
 \end{proof}

   \begin{proof} [Proof of Corollary \ref{cor: SiegelGeneralv}]
   We select  triples $q,qy$ (in place of $x$) and $z$ as in Corollary \ref{cor: 1stex}.
 Let $P_q(z)=\prod_{p\leq y,\ p\nmid q} p$ and select $r$ so that $rq\equiv 1 \pmod {P_q(z)}$, and then let $b\equiv ra\pmod {P_q(z)}$. Then
 $b+j\equiv r(a+jq) \pmod  {P_q(z)}$ so that $(b+j,P_q(z))=(a+jq,P_q(z))$. Let $S:=\{ j\in [0,y-1]: (b+j,P_q(z))=1\} $ so that
 \[
 \# S = \#\{ j\in [0,y-1]: (a+jq,P_q(z))=1\} = N(qy,z;q,a).
 \]
 Therefore if $\chi(a)=-1$ then 
 \[
 \# S   = ( F(u) + O(\Delta ))  \cdot \frac{q}{\phi(q)}  \prod_{p\leq z} \bigg( 1 -\frac{1}p \bigg) y
 \]
 by Corollary \ref{cor: 1stex}.
 
If $q=p_1^{e_1}\cdots p_k^{e_k}$ with select $a_1\pmod {p_1}$ to minimize $\{ s\in S: s\equiv a_1\pmod {p_1}\}$ and let
$S_1:=\{ s\in S: s\not\equiv a_1\pmod {p_1}\}$ so that $\# S_1\geq (1-\frac 1{p_1}) \# S$. We then do the same for $p_2,\dots$  and select an integer $B$ for which $B\equiv b \pmod  {P_q(z)}$ and $B\equiv -a_j \pmod {p_j}$ for all $j$. Therefore
\[
\#\{ j\in [0,y-1]: (B+j,P(z))=1\}\geq \frac{\phi(q)}q  \# S   \geq ( F(u) + O(\Delta ))    \prod_{p\leq z} \bigg( 1 -\frac{1}p \bigg) y.
\]
Now $qy=z^u$ and $y=z^v$ so that as $z=q^A$ we have $u=v+\frac 1A$.  
Therefore 
\[
F(u) + O(\Delta )=F\bigg(v+\frac 1A\bigg) + O(\Delta )=F(v)+o_{A\to \infty}(1),
\]
since $F(.)$ is continuous. Therefore, letting $A\to \infty$ we have
\[
 \# \{ n\in (x,x+y]: (n,P(y^{1/v}))=1 \} \gtrsim F(u)  \prod_{p\leq z} \bigg( 1 -\frac{1}p \bigg) y,
 \]
 A lower bound of the same size follows from   Jurkat and Richert's result given in \eqref{eq: JR theorem}, and therefore the asymptotic result follows.

If $\chi(a)=1$ then we proceed analogously but instead we select our arithmetic progressions $a_j \pmod {p_j}$ to maximize the number in this arithmetic progression in the already-sifted set.
 \end{proof}

\begin{proof} [Proof of Corollary \ref{cor: Admissible}] Fix $\epsilon>0$.
Take $v=1/(1-\epsilon)$ in Corollary \ref{cor: SiegelGeneralv}] so that there exists an  integer $x$ for which
\[
 S(x,y,y^{1-\epsilon}) \sim 2e^\gamma (1-\epsilon) \prod_{p\leq y^{1-\epsilon}} \bigg( 1-\frac 1p \bigg) y \sim \frac {2y}{\log y}.
\]
Let $B$ be the set of positive integers $n\leq y$ for which $x+n$ has no prime factor $\leq y^{1-\epsilon}$ so that 
$\# B=S(x,y,y^{1-\epsilon})$, and $B$ contains no integers $\equiv -x \pmod p$ for every prime $p\leq y^{1-\epsilon}$.
If the primes in $ (y^{1-\epsilon},y]$ are $p_1<p_2<\dots<p_k$, we let $B_1=B$ and then for $j=1,\dots,k$ we select the arithmetic progression $a_j \pmod {p_j}$ containing the least number of elements of $B_j$, and let $B_{j+1}$ be $B_j$ less that arithmetic progression. Therefore $\# B_{j+1}\geq (1-\frac 1{p_j})\#B_j$ for each $j$, and so $A:=B_{k+1}$ is an admissible set with
\[
\frac {2y}{\log y} \sim \# B\geq \# A \geq \prod_{ y^{1-\epsilon}<p\leq y} \bigg( 1-\frac 1p \bigg) \# B  \gtrsim (1-\epsilon)\frac {2y}{\log y};
\]
that is, $\# A=(2+O(\epsilon))\frac {y}{\log y} $.
The result follows letting $\epsilon\to 0^+$.
\end{proof}

 \section*{At the sifting limit, redux}

 The link between exceptional zeros and the sifting limit was discussed in section 9 of \cite{BFT}. We now develop these ideas further when $y=z^{2+o(1)}$.  Here we will take  $y=q^{A-1}$ with $A\to \infty$ as slowly as we like.

 \begin{proof} [Proof of Proposition \ref{prop; Iwaniec}]
 We will assume that $\Delta=\Delta_\beta(x)\leq 1$.
 By \eqref{eq: PNTAP.IK2}, for $x\geq q^A$ we have
 \[
 \pi(x;q,a) = (1-\chi(a)) \frac{ \pi(x)}{\phi(q)} +  (\chi(a)+O(\Delta) ) (1-\beta) \frac{x}{\phi(q)}  .
 \]
 Proceeding as in the proof of Corollary \ref{cor: SiegelGeneralv}, we have, for $b\equiv a/q \pmod {P_q(z)}$,
 \[
\#\{ j\in [0,y-1]: (b+j,P_q(z))=1\} = N(x,z;q,a)  .
 \]
 If $x^{1/2}<z\leq x$ and $\chi(a)=1$ then the above implies that
 \[
 N(x,z;q,a)=  \pi_{1}(x,z;q,a) =\pi(x;q,a) -\pi(z;q,a) = (1+O(\Delta+\lambda) ) (1-\beta) \frac{x}{\phi(q)}   
 \]
 where $\lambda=\lambda(z,x):= \frac 1{\log x}+ \frac zx$.
 Letting $x=qy$ and selecting residue classes for each prime dividing $q$ as in the proof of Corollary \ref{cor: SiegelGeneralv}, we deduce that there exists an integer $B$ for which
 \[
 \#\{ j\in [0,y-1]: (B+j,P(z))=1\} \leq \frac{\phi(q)} q N(x,z;q,a)  \leq (1-\beta) y (1+O(\Delta+\lambda) ).
 \]
 
 In the range $x^{1/2}\geq z > x^{1/3}$ we have $N(x,z;q,a)=  \pi_{1}(x,z;q,a)+ \pi_{2}(x,z;q,a)$, and  
 \begin{align*}
 \pi_{2}(x,z;q,a)&=   \sum_{\substack{z<p<\sqrt{x} \\ p\nmid q}}  \pi_{1}(x/p;q,a/p)- \pi_{1}(p;q,a/p) \\
 & =  \sum_{\substack{z<p<\sqrt{x} \\ p\nmid q}} \left(  (1-\chi(a/p)) \frac{ \pi(x/p)-\pi(p)}{\phi(q)} +  (\chi(a/p)+O(\Delta)) (1-\beta) \frac{x/p-p}{\phi(q)} \right).
\end{align*}
Summing over all $(a,q)=1$ we obtain
\begin{align*}
\pi_{2,q}(x,z)& =  \sum_{\substack{z<p<\sqrt{x} \\ p\nmid q}}  \pi(x/p) -\pi(p)  +  O\bigg( \Delta  (1-\beta) \frac{x}{p }  \bigg)\\
&= \sum_{\substack{z<p<\sqrt{x} \\ p\nmid q}} ( \pi(x/p) -\pi(p) ) +  O(\Delta(1-\beta) x)
\end{align*}
Therefore $ \phi(q) \pi_{2}(x,z;q,a)   -  (1+\chi(a))\pi_{2,q}(x,z)$ equals $\chi(a)$ times
\[
=-2  \sum_{\substack{z<p<\sqrt{x} \\ \chi(p)=1}}    ( \pi(x/p)-\pi(p)) +  (1-\beta)  \sum_{ z<p<\sqrt{x}  }  \chi(p)  (x/p-p)  +  O(\Delta(1-\beta) x)
\]
Since $\#\{ p\leq x:\chi(p)=1\} = \frac 12(1+O(\Delta)) (1-\beta)x$ we deduce by partial summation that 
 \begin{align*}
2 \sum_{\substack{z<p<\sqrt{x} \\ \chi(p)=1}}    ( \pi(x/p)-\pi(p))  &= (1-\beta) \int_z^{\sqrt{x}}  \frac x{t\log(x/t)} dt+  O(\Delta(1-\beta) x)\\
&=  (1-\beta)x \bigg(\log (u-1)-\log \frac u2  +  O(\Delta)\bigg)    ,
\end{align*}
for $x=z^u$ while
 \begin{align*}
\sum_{ z<p<\sqrt{x}  }  \chi(p)  (x/p-p)  = -x \log\bigg( \frac{\log \sqrt{x}}{\log z}\bigg) +O((\Delta+\lambda)x)
=   -x  \bigg( \log \frac u2  +  O(\Delta+\lambda)\bigg)  .
\end{align*}
So if $x=z^u$ with $u\geq 2$ then 
\[
\phi(q) \pi_{2}(x,z;q,a)   =  (1+\chi(a))\pi_{2,q}(x,z)  -\chi(a)(1-\beta) x (\log (u-1)+O(\Delta+\lambda)).
\]
Therefore if $2\leq u<3$ then
 \begin{align*}
  N(x,z;q,a)&=  \pi_{1}(x,z;q,a)+ \pi_{2}(x,z;q,a)\\
&=(1-\chi(a)) \frac{ \pi(x)}{\phi(q)}   + (1+\chi(a))  \frac{\pi_{2}(x,z)}{\phi(q)}  +\chi(a)(1-\beta) \frac{x}{\phi(q)} (  1-\log (u-1)+O(\Delta+\lambda)) .
\end{align*}
In this range for $x$ we have $\lambda\asymp \frac 1{\log x}$.
 We use the prime number theorem to obtain
\[
 \pi_{2,q}(x,z)=\pi_{2}(x,z)\sim
\begin{cases} \frac x{\log x} \log(u-1) &\text{ if } x/z^2\to \infty;\\
\frac {2x}{(\log x)^2} (\log c -(1-\frac 1c)) &\text{ if } x=cz^2,\ c>1.
\end{cases}
\]
The latter estimate yields that if $x>z^2$ then 
\[
\pi_{2}(x,z) \lesssim \frac {2x\log x/z^2}{(\log x)^2} \leq \frac {2qy\log qy/z^2}{(\log y)^2}
\]
when $x=qy$. If 
$\chi(a)=1$ then, as above, there exists an integer $B$ for which
 \begin{align*}
S(B,y,z) &\leq \frac{\phi(q)} q N(x,z;q,a)  =   2\frac{\pi_{2}(qy,z)}{q}  + (1-\beta) y (  1-\log (u-1)+O(\Delta+\lambda)) \\
&\lesssim  \frac {4y\log qy/z^2}{(\log y)^2} + (1-\beta) y (  1+O(\Delta))
\end{align*}
since $y\gg z^2$.

The claimed result now follows: For any $\epsilon>0$ we let $y=q^{1/\epsilon-1}$ and $\kappa=\epsilon^2$ so that $\Delta=(1-\beta)\log qy\leq  \frac{\kappa}{\log q}\cdot \epsilon^{-1}  \log q =\epsilon$, and is therefore arbitrarily small.
 \end{proof}
 
 \begin{proof} [More than the proof of Proposition \ref{prop;Motohashi}]
Taking $\chi(a)=-1$ in the previous proof we obtain that
 if $x^{1/3}<z\ll \frac x{\log x}$ then 
 \[
 \phi(q) N(x,z;q,a) =  (2+O(\lambda)) \frac x{\log x}  -   (1-\beta) x (  1-\log^+ (u-1)+O(\Delta+\lambda)) .
\]
Proceeding as before (but now removing the arithmetic progressions with the least number of unsieved elements),  
 this implies that there exists an integer $X$ for which
 \begin{align*}
 S(X,y,z) &\geq  (2+O(\lambda)) \frac y{\log qy}  -  \frac 12 (1-\beta) y (  C(u)^2+O(\Delta+\lambda)) \\
 &\geq   \frac {2y}{\log y} \bigg( 1-\frac{\log q+O(1)}{\log y} +O\bigg( \bigg( \frac{\log q}{\log y} \bigg)^2 \bigg) \bigg)  -   \frac 12 (1-\beta) y (  C(u)^2+O(\Delta+\lambda)) \\
  &\geq   \frac {2y}{\log y} \bigg( 1-\frac 1A -   \frac { C(u)^2}{4B} +O\bigg(\frac 1{A\log q}+  \frac 1{A^2}+  \frac 1{B^2} \bigg)   \bigg) 
  \end{align*}  
  writing $y=q^A$ and $1-\beta=\frac 1{B\log y}$ so that $\Delta\ll \frac 1B$. We select 
   \[ A=\frac 2{C(u)} \sqrt{\frac 1{(1-\beta)\log q}} \text{ and }  B=\frac {C(u)}2 \sqrt{\frac 1{(1-\beta)\log q}}\]
    so that
 \begin{align*}
S(X,y,z) &\geq  \frac {2y}{\log y} \bigg( 1-C(u)   \sqrt{  (1-\beta)\log q}  +O\bigg( \sqrt{  \frac{1-\beta}{\log q}}+  (1-\beta)\log q \bigg)   \bigg) \\
& =  \frac {2y}{\log y} -  (1+o(1))      \frac {4y\log q}{(\log y)^2}
  \end{align*}  
since $  C(u)   \sqrt{  (1-\beta)\log q} = \frac {2 \log q} {\log y}$.
 The results claimed in the proposition therefore follow by inserting the given values for $1-\beta$ and determining $\log q$ in terms of $y$.
  \end{proof}

 \section{Largest gaps between primes, when there are Siegel zeros}
 
 \begin{proof} [Proof of Corollary \ref{cor: PrimeGaps}]
 In the proof of Proposition  \ref{prop; Iwaniec} we have seen that there exists and integer $X$ for which
 $N:=S(X,y,z) \lesssim (1-\beta) y$ where $z=(qy)^{1/2}$.
 Let $a_1,\dots,a_N$ be the integers in $\{ X+a\in (X,X+y]: \   (X+a,P(z))=1 \}$, and
 $p_1<p_2<\dots<p_N$ be primes taken from the interval $(z,Z]$ where $Z:=(1+\epsilon)(1-\beta) y\log y$.
 This is possible since   $z=(qy)^{1/2}=o((1-\beta) y)$ (as $1-\beta\gg q^{-o(1)}$ by Siegel's theorem, and $y=q^A$ with $A$ large) so, by the prime number theorem there are more than $N$ primes in the interval.
 
 We now select an integer $x$ such that $x\equiv X \pmod {P(z)}$ and $x\equiv -a_j \pmod {p_j}$ for $1\leq j\leq N$. We see that 
 $S(x,y,Z)=0$. Therefore if $p_n$ is the largest prime $\leq x$ and $p_{n+1}$ is the next smallest prime then $p_{n+1}-p_n>y$ while we can select $x\in (P(Z),2P(Z)]$. Therefore we see that $Z\sim \log x$ by the prime number theorem, and 
 \[ y\sim \frac{ Z/(1-\beta)} {\log Z}\] letting $\epsilon\to 0^+$. We deduce that 
 $A\log q\sim \log y\sim \log Z\sim \log\log x$.
 Therefore if $1-\beta=\frac 1{(\log q)^B}$ then $\frac 1{1-\beta} =(\log q)^B\sim A^{-B} (\log\log x)^B$, and so
 \[
 y\sim A^{-B} \log x (\log\log x)^{B-1}.
 \]
 The result follows taking $A$ fixed but large.
 \end{proof}

 Cram\'er conjectured that the largest gap between primes $\leq x$ should be $\sim (\log x)^2$; however 
 Cram\'er made this conjecture based on a model for the distribution of primes that does not take into account divisibility by
 small primes. A modified model is discussed in \cite{GL} (as well as \cite{BFT}) which does take into account the small primes.
 Proposition 1 of \cite{GL} suggests that we take $z=\epsilon \log x$ and $y=y(x)$ a little larger than $(\log x)^2$ so that 
 \[
 \min_X S(X,y,z) \sim \prod_{p\leq z} \bigg( 1 -\frac 1p\bigg)  (\log x)^2
 \]
 then the largest gap between primes in $[x,2x]$ should be $\sim y(x)$.  By making certain guesses about the sieve, the authors of  \cite{GL} then predict that $y(x)\sim 2e^{-\gamma}(\log x)^2$ (or perhaps with a slightly larger constant than ``$2e^{-\gamma}$'', depending on a certain sieve constant). However we see have seen here that the existence of Siegel zeros plays havoc on our guesses about sieving intervals. If we use Mertens' Theorem and substitute in the bound from Proposition
 \ref{prop; Iwaniec} then we obtain, writing $y=w(\log x)^2$ where $w=w(x)\geq 1$,
 \[
     w ( \log (qw/\epsilon^2 )+ (1-\beta_q)(\log\log x)^2   ) \gtrsim e^{-\gamma}  \log\log x
 \]
 where  $y=q^{A-1}$ with $A\to \infty$ as slowly as we like. 
 Now $\log q\ll \frac 1A \log\log x$ and so we see from here that $w(x)\to \infty$, letting $A\to \infty$. The question is how fast?
 
 The proof of Proposition
 \ref{prop; Iwaniec} give  $\log y \ll \frac 1{1-\beta}$   and so 
 $\log\log  x\ll \log y\ll q^{o(1)}$ by Siegel's Theorem
 We will be unable to prove $w$ to be any larger than $\log\log x\ll q^{o(1)}$ so the above inequality can be taken to be 
 \[
     w ( 4\log q + (1-\beta_q)(\log y)^2   ) \gtrsim 2e^{-\gamma}  \log y.
 \]
 We optimize by taking $\log y=2 \sqrt{ \frac{\log q}{1-\beta} }$ and so this becomes
 \[
      w   \gtrsim       \frac {e^{-\gamma} \log y}{4 \log q}
 \]
 What does this mean in terms of $x$?
 
 If we can only say that there are infinitely many Siegel zeros than this heuristic implies that 
 \[
\limsup_{x\to \infty}  \frac{ \max_{p_n\leq x} p_{n+1}-p_n }{(\log x)^2} \to \infty.
 \]
 Suppose instead that we have infinitely many $q$ for which $1-\beta\ll 1/(\log q)^{2c-1}$ for some $c>1$. Then our heuristic implies that
there are infinitely $p_n$ for which
\[
p_{n+1}-p_n  \gg (\log p_n)^2 (\log\log p_n)^{1-\tfrac 1c}.
\]
If $1-\beta<1/\exp(\log q)^{1/c})$ for some $c>1$ then our heuristic implies that
there are infinitely $p_n$ for which
\[
p_{n+1}-p_n  \gg (\log p_n)^2 \frac{ \log\log p_n}{(\log\log\log p_n)^c}.
\]
Much the same predictions can be deduced from the heuristic in \cite{BFT}, as pointed out to me by Ford.

 \bibliographystyle{plain}

\end{document}